\newcommand{\bburl}[1]{\textcolor{blue}{\url{#1}}}
\newcommand{\monthyear}[1]{%
  \def\@monthyear{\uppercase{#1}}}
\newcommand{\volnumber}[1]{%
  \def\@volnumber{\uppercase{#1}}}
\def\ps@plain{\ps@empty
  \def\@oddfoot{\@monthyear \hfil \thepage}%
  \def\@evenfoot{\thepage \hfil \@volnumber}}
\def\ps@firstpage{\ps@plain}
\def\ps@headings{\ps@empty
  \def\@evenhead{%
    \setTrue{runhead}%
    \def\thanks{\protect\thanks@warning}%
    \uppercase{\ }\hfil}%
  \def\@oddhead{%
    \setTrue{runhead}%
    \def\thanks{\protect\thanks@warning}%
    \hfill\uppercase{Generalizing Zeckendorf's Theorem}}%
  \let\@mkboth\markboth
  \def\@evenfoot{%
    \thepage \hfil \@volnumber}%
  \def\@oddfoot{%
    \@monthyear \hfil \thepage}%
  }%
\theoremstyle{plain}
\numberwithin{equation}{section}
\newtheorem{thm}{Theorem}[section]
\newtheorem{theorem}[thm]{Theorem}
\newtheorem{lemma}[thm]{Lemma}
\newtheorem{corollary}[thm]{Corollary}
\newtheorem{example}[thm]{Example}
\newtheorem{definition}[thm]{Definition}
\newtheorem{proposition}[thm]{Proposition}
\newtheorem{remark}[thm]{Remark}
\newtheorem{conjecture}[thm]{Conjecture}
\newcommand{\ignore}[1]{}
\newcommand\be{\begin{eqnarray}}
\newcommand\ee{\end{eqnarray}}
\newcommand\bea{\begin{eqnarray}}
\newcommand\eea{\end{eqnarray}}
\newcommand\ben{\begin{enumerate}}
\newcommand\een{\end{enumerate}}
\newtheorem{cor}[thm]{Corollary}
\begin{document}

\monthyear{August 2021}
\volnumber{}
\setcounter{page}{1}

\title{Generalizing Zeckendorf's Theorem to Homogeneous Linear Recurrences, II}

\author{Thomas C. Martinez, Steven J. Miller,  Clayton Mizgerd, Jack Murphy, Chenyang Sun}


\address{\tiny{Department of Mathematics, Harvey Mudd College, Claremont, CA 91711}} \email{tmartinez@hmc.edu}

\address{\tiny{Department of Mathematics and Statistics, Williams College, Williamstown, MA 01267}} \email{sjm1@williams.edu, Steven.Miller.MC.96@aya.yale.edu}
\email{cmm12@williams.edu}
\email{jgm4@williams.edu}
\email{cs19@williams.edu}


\date{\today}

\begin{abstract}  Zeckendorf's theorem states that every positive integer can be written uniquely as the sum of non-consecutive shifted Fibonacci numbers $\{F_n\}$, where we take $F_1=1$ and $F_2=2$. This has been generalized for any Positive Linear Recurrence Sequence (PLRS), which informally is a sequence satisfying a homogeneous linear recurrence with a positive leading coefficient and non-negative integer coefficients. In this and the preceding paper we provide two approaches to investigate linear recurrences with leading coefficient zero, followed by non-negative integer coefficients, with differences between indices relatively prime (abbreviated ZLRR), via two different approaches. The first approach involves generalizing the definition of a legal decomposition for a PLRS found in Kolo\u{g}lu, Kopp, Miller and Wang. We prove that every positive integer $N$ has a legal decomposition for any ZLRR using the greedy algorithm. We also show that a specific family of ZLRRs lost uniqueness of decompositions. The second approach converts a ZLRR to a PLRR that has the same growth rate. We develop the Zeroing Algorithm, a powerful helper tool for analyzing the behavior of linear recurrence sequences. We use it to prove a very general result that guarantees the possibility of conversion between certain recurrences, and develop a method to quickly determine whether a sequence diverges to $+\infty$ or $-\infty$, given any real initial values. This paper investigates the second approach.
\end{abstract}

\thanks{This work was supported by NSF Grants DMS1561945, DMS1659037 and DMS1947438 as well as the Finnerty Fund. We thank the participants of the 2019 and 2020 Williams SMALL REUs and the referee for constructive comments.}

\maketitle

\tableofcontents



\section{Introduction and Definitions}\label{sec:intro}


\subsection{History and Past Results}\label{sec:history}

The Fibonacci numbers are one of the most well-known and well-studied mathematical objects, and have captured the attention of mathematicians since their conception. This paper focuses on a generalization of Zeckendorf's theorem, one of the many interesting properties of the Fibonacci numbers. Zeckendorf \cite{Ze} proved that every positive integer can be written \textbf{uniquely} as the sum of non-consecutive Fibonacci numbers (called the \textit{Zeckendorf Decomposition}), where the (shifted) Fibonacci numbers\footnote{If we use the standard initial conditions then 1 appears twice and uniqueness is lost.} are $F_1 = 1, F_2 = 2, F_3 = 3, F_4 = 5, \dots$. This result has been generalized to other types of recurrence sequences. We set some notation before describing these generalizations.

\begin{definition}[\cite{KKMW}, Definition 1.1, (1)]\label{def:plrrdefinition}
    We say a recurrence relation is a \textbf{Positive Linear Recurrence Relation (PLRR)} if there are non-negative integers $L, c_1, \dots, c_L$ such that
        \begin{equation}
            H_{n+1}\ =\ c_1\, H_n + \cdots + c_L\, H_{n+1-L},
        \end{equation}
        with $L, c_1$ and $c_L$ positive.
\end{definition}

\begin{definition}[\cite{KKMW}, Definition 1.1, (2)]\label{def:plrsdefinition}
    We say a sequence $\{H_n\}_{n=1}^{\infty}$ of positive integers arising from a PLRR is a \textbf{Positive Linear Recurrence Sequence (PLRS)} if $H_1=1$, and for $1 \leq n < L$ we have
        \begin{equation}
            H_{n+1}\ =\ c_1\,H_n + c_2\,H_{n-1} + \cdots + c_n \,H_1 + 1.
        \end{equation}
    We call a decomposition $N=\sum_{i=1}^m a_i H_{m+1-i}$ of a positive integer, and its associated sequence $\{a_i\}_{i=1}^m$, \textbf{legal} if $a_1>0$, the other $a_i\geq 0$, and one of the following holds.\vspace{1mm}
    \begin{itemize}
        \item \emph{Condition 1:} We have $m<L$ and $a_i = c_i$ for $1 \leq i \leq m$,
        \item \emph{Condition 2:} There exists $s\in \{1,\dots,L\}$ such that
        \[
            a_1\ =\ c_1, \ \ a_2\ =\ c_2, \ \ \dots, \ \ a_{s-1}\ =\ c_{s-1}, \ \ a_s\ <\ c_s,
        \]
        $a_{s+1}, \dots , a_{s+\ell} = 0$ for some $\ell \geq 0$, and $\{a_{s+\ell+i}\}_{i=1}^{m-s-\ell}$is legal.
    \end{itemize}
Additionally, we let the empty decomposition be legal for $N=0$.
\end{definition}

Informally, a legal decomposition is one where we cannot use the recurrence relation to replace a linear combination of summands with another summand, and the coefficient of each summand is appropriately bounded; other authors \cite{DG, Ste} use the phrase $G$-ary decomposition for a legal decomposition. For example, if $H_{n+1} = 3H_n + 2H_{n-1} + 4H_{n-2}$, then $H_5 + 3H_4 + 2H_3 + 3H_2$ is legal, while $H_5 + 3H_4 + 2H_3 + 4H_2$ is not (we can replace $3H_4 + 2H_3 + 4H_2$ with $H_5$), nor is $6H_5+2H_4$ (the coefficient of $H_5$ is too large).\\

We now state an important generalization of Zeckendorf's Theorem, and then describe what object we are studying and our results. See \cite{BBGILMT, BM, BCCSW, CFHMN, CFHMNPX, DFFHMPP, Ho,MNPX, MW, Ke, Len} for more on generalized Zeckendorf decompositions, and \cite{GT, MW} for a proof of Theorem \ref{thm:genzeckthmforPLRS}.
\begin{theorem}[Generalized Zeckendorf's theorem for PLRS]\label{thm:genzeckthmforPLRS}
Let $\{H_n\}_{n=1}^{\infty}$ be a \emph{Positive Linear Recurrence Sequence}. Then
\begin{enumerate}
    \item there is a unique legal decomposition for each non-negative integer $N \geq 0$, and
    \item there is a bijection between the set $\mathcal{S}_n$ of integers in $[H_n,H_{n+1})$ and the set $\mathcal{D}_n$ of legal decompositions $\sum_{i\,=\,1}^n a_i\, H_{n+1-i}$.
\end{enumerate}
\end{theorem}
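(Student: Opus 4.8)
The plan is to prove both parts by strong induction, establishing existence and uniqueness of legal decompositions simultaneously and then deriving the bijection as a counting consequence.

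The plan is to prove part (2) by strong induction on $n$ and read off part (1) as a corollary. Since the half-open intervals $[H_n, H_{n+1})$ with $n \ge 1$ partition the integers $\ge H_1 = 1$ and $N = 0$ is handled by the empty decomposition, a bijection between $\mathcal{D}_n$ and $\mathcal{S}_n$ for every $n$ is precisely the statement that each $N \ge 0$ has exactly one legal decomposition. The bijection in question is the evaluation map $\{a_i\}_{i=1}^n \mapsto \sum_{i=1}^n a_i H_{n+1-i}$, so I must show it is well defined into $[H_n, H_{n+1})$, injective (uniqueness), and surjective (existence).

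The first ingredient is an upper bound: every legal decomposition with leading index $n$ has value at most $H_{n+1}-1$, with equality attained. I would prove this by induction on the recursive structure of the legality definition. Under Condition 1 the value is $c_1 H_n + \cdots + c_n H_1 = H_{n+1} - 1$ by the initial conditions; under Condition 2, matching the prefix $c_1, \dots, c_{s-1}$, taking $a_s \le c_s - 1$, and bounding the embedded legal tail (leading index $\le n-s-\ell$) by $H_{n-s-\ell+1} - 1 \le H_{n-s+1} - 1$ via the inductive hypothesis, the total is strictly below $c_1 H_n + \cdots + c_s H_{n-s+1} \le H_{n+1}$, where the final step uses $c_{s+1}, \dots, c_L \ge 0$. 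Together with the trivial lower bound $\ge H_n$ from $a_1 \ge 1$, this places the image of $\mathcal{D}_n$ inside $[H_n, H_{n+1})$.

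The core of the proof is a single \emph{legality-aware} greedy step that is simultaneously forced (giving uniqueness) and successful (giving existence). Fix $N \in [H_n, H_{n+1})$. I claim the leading coefficient of any legal decomposition of $N$ must be $a_1 = \min(c_1, \lfloor N/H_n\rfloor)$: if a legal decomposition has $a_1 < c_1$ then Condition 2 forces a descent at $s=1$, so $N = a_1 H_n + (\text{legal tail} < H_n)$ and hence $a_1 = \lfloor N/H_n \rfloor$; whereas $a_1 = c_1$ forces $N \ge c_1 H_n$. Iterating this inside the ``matching phase'' — where one has temporarily set $a_1 = c_1, \dots, a_{k-1} = c_{k-1}$ and compares the running remainder $R_{k-1}$ against $c_k H_{n+1-k}$ — each coefficient is likewise forced to $a_k = \min(c_k, \lfloor R_{k-1}/H_{n+1-k}\rfloor)$. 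Crucially, the bound $N < H_{n+1} = c_1 H_n + \cdots + c_L H_{n-L+1}$ guarantees the remainder drops below $c_k H_{n+1-k}$ by the time $k = L$, so a strict descent $a_s < c_s$ occurs at some $s \le L$; this is exactly what rules out the (illegal) all-$c$ string and forces Condition 2 to apply. After the descent the remainder is $< H_{n-s+1}$, so it lies in a lower interval and, by the inductive hypothesis, has a unique legal decomposition, which becomes the legal tail (padded with the intervening zeros). This shows every $N \in [H_n, H_{n+1})$ has one and only one legal decomposition, with leading index exactly $n$ since $a_1 \ge 1$.

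The step I expect to be the main obstacle is precisely this reconciliation of the greedy with the nonlocal Condition 2. Unlike the classical Fibonacci setting, $a_1$ is \emph{not} simply $\lfloor N/H_n\rfloor$: for the recurrence $H_{n+1} = H_n + 3H_{n-1}$ one has $4 \in [H_2, H_3) = [2,5)$ but the legal decomposition is $1\cdot H_2 + 2\cdot H_1$ rather than $2\cdot H_2$, and indeed $H_{n+1} \le (c_1+1)H_n$ can fail. The real work is to show that the matching phase always terminates in a genuine descent (so that what remains is an honestly legal tail to which induction applies) and that the assembled string satisfies Condition 2 on the nose; once this structural bookkeeping is in place, injectivity, surjectivity, and the interval bound combine to give the bijection $\mathcal{D}_n \leftrightarrow \mathcal{S}_n$ and therefore the theorem.
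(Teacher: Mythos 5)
The paper itself contains no proof of Theorem \ref{thm:genzeckthmforPLRS}: it is quoted as a known result, with the proof attributed to \cite{GT,MW} in the sentence immediately preceding the statement. So there is no internal argument to compare yours against; your proposal must stand on its own, and in its essentials it does. The outline you give --- the interval lemma placing every legal decomposition with leading index $n$ inside $[H_n, H_{n+1})$ (with value exactly $H_{n+1}-1$ in the Condition 1 case), the forcing of each coefficient to $a_k = \min\bigl(c_k, \lfloor R_{k-1}/H_{n+1-k}\rfloor\bigr)$ during the matching phase, and termination of that phase via $N < H_{n+1} = c_1 H_n + \cdots + c_L H_{n+1-L}$ --- is precisely the standard greedy/strong-induction proof found in the cited references, and your identification of the key subtlety (that $a_1$ need not equal $\lfloor N/H_n\rfloor$, illustrated correctly by $H_{n+1} = H_n + 3H_{n-1}$ and $N = 4$) is on target.

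One loose end should be flagged. Your claim that the matching phase ``always terminates in a genuine descent,'' so that Condition 2 is forced to apply, is only correct when $n \ge L$. When $n < L$ and $N = H_{n+1}-1$, the greedy matches all $n$ available coefficients ($a_i = c_i$ for $1 \le i \le n$) and never descends; legality of that string comes from Condition 1, not Condition 2, and it is not an ``illegal all-$c$ string'' --- only the full-length string $c_1,\dots,c_L$ is illegal. You do handle Condition 1 correctly in the upper-bound lemma, so this is an omission in the case analysis of the existence argument rather than a wrong idea, but a complete write-up must split termination into: (a) a descent at some $s \le \min(n,L)$, giving Condition 2 with the inductively decomposed remainder (padded by zeros) as the tail; or (b) no descent with $n < L$, which forces $N = H_{n+1}-1$ and yields the Condition 1 decomposition. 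When $n \ge L$, case (b) is impossible by exactly your observation that the remainder would then be $N - H_{n+1} \ge 0$, a contradiction. With that case distinction added, the induction closes and both parts of the theorem follow as you describe.
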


While this result is powerful and generalizes Zeckendorf's theorem to a large class of recurrence sequences, it is restrictive in that the leading term must have a positive coefficient. We examine what happens in general to existence and uniqueness of legal decompositions if $c_1=0$. Some generalizations were studied in \cite{CFHMN, CFHMNPX} on sequences called the $(s,b)$-Generacci sequences. In-depth analysis was done on the $(1,2)$-Generacci sequence, later called the Kentucky sequence,  and the Fibonacci Quilt sequence; the first has uniqueness of decomposition while the second does not.

\begin{definition}\label{def:zlrrdefinition}
    We say a recurrence relation is an $s$-deep \textbf{Zero Linear Recurrence Relation (ZLRR)} if the following properties hold.
    \begin{enumerate}
        \item \emph{Recurrence relation:} There are non-negative integers $s, L, c_1, \dots, c_L$ such that
        \begin{equation}\label{eqn:zlrsrecurrence}
            G_{n+1}\ =\ c_1\, G_n + \cdots + c_s\, G_{n+1-s} + c_{s+1}\, G_{n-s}+ \cdots + c_L\, G_{n+1-L},
        \end{equation}
        with $c_1,\dots, c_s = 0$ and $L, c_{s+1}, c_L$ positive.
        \item \emph{No degenerate sequences:} Let $S =\{m  \mid c_m \neq 0\}$ be the set of indices of positive coefficients. Then $\gcd(S) = 1$.
    \end{enumerate}
\end{definition}

We impose the second restriction to eliminate recurrences with undesirable properties, such as $G_{n+1} = G_{n-1} + G_{n-3}$, where the odd- and even-indexed terms do not interact. Any sequence satisfying this recurrence splits into two separate, independent subsequences. Also note that $0$-deep ZLRRs are just PLRRs whose sequences and decomposition properties are well-understood.\\

A natural question to ask is how decomposition results for PLRSes may be extended to sequences satisfying ZLRRs; we offer two approaches toward addressing it. \cite{MMMS1} focuses on generalizing Zeckendorf's theorem directly to sequences satisfying ZLRSes, while this paper develops a method to convert ZLRRs to PLRRs whose sequences have nice decomposition properties (Theorem \ref{thm:genzeckthmforPLRS}).\\

We develop a powerful helper tool in analyzing linear recurrences, the \textbf{Zeroing Algorithm}; we give a full introduction of how it works in Section \ref{sec:conversion}. It is worth noting that this method has more uses than that of generalizing Zeckendorf's theorem. As the first method required specific initial conditions, converting ZLRRs to PLRRs requires no specificity of initial conditions. We have yet to formally describe a manner to use this method to obtain meaningful results about decompositions, but our hope is that others can use the Zeroing Algorithm to do so. For the rest of this section, for completeness, we review some standard concepts. See for example Section 3 of \cite{Go} and Section 9 of \cite{MT-B}, and for applications, see \cite{BBGILMT}.

\begin{definition}
    Given a recurrence relation
     \begin{equation}\label{eq:recurrence}
            a_{n+1}\ =\ c_1 a_n + \cdots + c_k a_{n+1-k},
        \end{equation}
        we call the polynomial
        \begin{equation}\label{eq:characteristicpolynomial}
             P(x) \ = \  x^k - c_1\, x^{k-1}-c_2\, x^{k-2}-\cdots-c_k
        \end{equation}
        the \textbf{characteristic polynomial} of the recurrence relation. The degree of $P(x)$ is known as the order of the recurrence relation.
\end{definition}

For the rest of the paper we focus on the second approach, converting ZLRR to PLRR. The following definition makes the concept of conversion precise:

\begin{definition}\label{def:derivedfrom}
    We say that a recurrence relation $R_b$ is \textbf{derived from} another recurrence relation $R_a$ if
    \[
        P_b(x) \ = \  P_a(x)Q(x),
    \]
    where $P_a(x)$ and $P_b(x)$ are the characteristic polynomials of $R_a$ and $R_b$ respectively, as defined by equation \eqref{eq:characteristicpolynomial}, and $Q(x)$ is some polynomial with integer coefficients with $Q(x)$ not being the zero polynomial.
\end{definition}

Since the roots of $P_a$ are contained in $P_b$, any sequence satisfying the recurrence relation $R_a$ also satisfies $R_b$, which implies that the two recurrence relations yield the same sequence if the initial values of $\{b_n\}_{n\,=\,1}^{\infty}$ satisfy the recurrence relation $R_a$. This provides motivation for why the idea of a derived PLRR is relevant. To continue, we recall a useful object.\\

\begin{definition}\label{def:principal root}
We call a root $r$ of a polynomial \textbf{principal} if
\begin{enumerate}
    \item it is a positive root of multiplicity 1, and
    \item has magnitude strictly greater than that of any other root.\footnote{Note that, by definition, the principal root is unique.}
\end{enumerate}
\end{definition}
In Lemma \ref{lem:greatestroot}, we prove that the characteristic polynomial of any PLRR or ZLRR has a principal root.\\

\subsection{Main Results}\label{sec:mainresults}

We now state a main result, which has two important corollaries that guarantee the possibility of conversion between certain linear recurrences; the Zeroing Algorithm itself provides a constructive way to do so. We provide some examples of running the Zeroing Algorithm in Appendix \ref{app:examples}.

\begin{theorem}\label{thm:algorithmdivisibility}
Given some PLRR/ZLRR, let $P(x)$ denote its characteristic polynomial, and $r$ its principal root. Suppose we are given an arbitrary sequence of real numbers $\gamma_1,\gamma_2,\dots,\gamma_m$, and define, for $t\le m$,
\begin{equation}
    \Gamma_t(x)\ := \ \gamma_1\,x^{t-1}+\gamma_2\,x^{t-2}+\cdots+\gamma_{t-1}\,x+\gamma_t.
\end{equation}
If $\Gamma_m(r)>0$, there exists a polynomial $p(x)$, divisible by $P(x)$, whose first coefficients are $\gamma_1$ through $\gamma_m$, with no positive coefficients thereafter.
\end{theorem}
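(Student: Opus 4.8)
The plan is to translate the statement into the language of reversed polynomials and formal power series, where divisibility by $P$ becomes an exactly solvable division, and then to use the principal root $r$ to control the signs of the resulting coefficients. Writing $P(x)=x^k-c_1x^{k-1}-\cdots-c_k$ with all $c_i\ge 0$, set $\widetilde P(y):=y^kP(1/y)=1-c_1y-\cdots-c_ky^k$; since $\widetilde P$ has unit constant term it is a unit in $\mathbb R[[y]]$, and the reversal $x^{\deg B}B(1/x)$ of a polynomial $B(y)$ is divisible by $P(x)$ exactly when $\widetilde P(y)\mid B(y)$. Under this reversal, ``first $m$ coefficients equal to $\gamma_1,\dots,\gamma_m$, no positive coefficients thereafter'' becomes ``the coefficients of $y^0,\dots,y^{m-1}$ in $B$ are $\gamma_1,\dots,\gamma_m$ and all higher coefficients are $\le 0$.'' So it suffices to produce such a finite $B(y)$ divisible by $\widetilde P(y)$.

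First I would form the forced quotient. Let $B_0(y)=\gamma_1+\gamma_2y+\cdots+\gamma_my^{m-1}$ and expand $Q_\infty(y)=B_0(y)/\widetilde P(y)=\sum_{t\ge0}q_ty^t$ as a power series; its coefficients are determined by $q_0=\gamma_1$ and $q_t=\gamma_{t+1}+\sum_{i=1}^k c_iq_{t-i}$, with the convention $\gamma_{t+1}=0$ for $t\ge m$. In particular, for $t\ge m$ the $q_t$ satisfy the linear recurrence with characteristic polynomial $P$. Truncating at a degree $M\ge m$ gives a genuine polynomial $Q_M(y)=\sum_{t=0}^M q_ty^t$, and I set $B(y):=Q_M(y)\widetilde P(y)$; this is precisely the output of the Zeroing Algorithm. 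By construction $B$ is divisible by $\widetilde P$, it agrees with $B_0$ through degree $M$ (so its first $m$ coefficients are indeed $\gamma_1,\dots,\gamma_m$), its coefficients in degrees $m\le t\le M$ vanish because the $q_t$ satisfy the recurrence there, and its only possibly-nonzero tail coefficients are, for $j=1,\dots,k$, the quantities $b_{M+j}=-\sum_{i=j}^k c_i\,q_{M+j-i}$.

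The whole argument then reduces to making these finitely many tail coefficients nonpositive, i.e.\ to forcing the window $q_{M+1-k},\dots,q_M$ of quotient coefficients to be nonnegative. Here I would invoke the principal root: by Lemma~\ref{lem:greatestroot} the root $r$ is real, positive, simple, and strictly dominant in modulus, and since $\{q_t\}$ eventually satisfies the recurrence with characteristic polynomial $P$, we get $q_t\sim\alpha r^t$ for a constant $\alpha$ read off from the residue of $Q_\infty$ at its smallest-modulus pole $y_0=1/r$, namely $\alpha=-B_0(1/r)/\big(y_0\widetilde P'(y_0)\big)$. Now $B_0(1/r)=r^{-(m-1)}\Gamma_m(r)>0$ by hypothesis, while $\widetilde P'(1/r)=-\sum_{i=1}^k i c_i r^{-(i-1)}<0$ and $y_0>0$, so $\alpha>0$. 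Because $r$ strictly dominates all other roots, $q_t/r^t\to\alpha>0$, hence $q_t>0$ for all large $t$; choosing $M$ large enough that every index in the window $M+1-k,\dots,M$ lies in this range makes each $b_{M+j}\le0$. Reversing $B$ produces the desired $p(x)$.

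The main obstacle is the third step: pinning down the sign of the leading asymptotic coefficient $\alpha$. This is exactly where the hypothesis $\Gamma_m(r)>0$ is used, and it is essential that $r$ be a \emph{principal} root — simplicity and strict dominance are what let the single term $\alpha r^t$ control the sign of $q_t$ for large $t$, while positivity of $r$ together with $c_i\ge 0$ is what fixes $\widetilde P'(1/r)<0$. If $r$ were merely of largest modulus but not strictly dominant (e.g.\ for a degenerate recurrence, excluded here by $\gcd(S)=1$), the $q_t$ could oscillate in sign and the tail could not be guaranteed nonpositive. A small amount of care is also needed to confirm $Q_M\neq 0$ and to check the degree bookkeeping in the reversal, but these are routine once $\alpha>0$ is established.
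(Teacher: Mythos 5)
Your proposal is correct, and while it builds the same object as the paper (you note yourself that your truncated product is the output of the Zeroing Algorithm), the argument is organized along a genuinely different route. The paper proves the theorem by running a \emph{modified} Zeroing Algorithm, setting $Q_1(x)=\gamma_1\,(P(x)-x^k)$ and $Q_t(x)=x\,Q_{t-1}(x)-(q(1,t-1)-\gamma_t)\,P(x)-\gamma_t\,x^k$, and proving three inductive invariants (Lemma \ref{lem:modifiedAlgorithm}): $P(x)$ divides $p_t(x):=x^k\,\Gamma_t(x)+Q_t(x)$, the first $t$ coefficients of $p_t$ are $\gamma_1,\dots,\gamma_t$, and $Q_t(r)=-r^k\,\Gamma_t(r)$. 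The hypothesis $\Gamma_m(r)>0$ then gives $Q_m(r)<0$, and the termination criterion for the unmodified algorithm (Theorem \ref{thm:algorithmtermination}, whose proof rests on the identity $Q_t(r)=r^t\,Q_0(r)$ and the Binet expansion) supplies a step $m+t_0$ at which no positive coefficients remain. You replace the inductive invariants by a one-shot generating-function construction: dividing $B_0(y)$ by $\widetilde P(y)=y^kP(1/y)$ in $\mathbb{R}[[y]]$ automatically encodes the prescribed initial coefficients, truncating at degree $M$ yields divisibility for free (indeed $x^{M+k}Q_M(1/x)\widetilde P(1/x)=\bigl(x^M Q_M(1/x)\bigr)P(x)$), and only the $k$ tail coefficients $b_{M+j}=-\sum_{i=j}^{k}c_i\,q_{M+j-i}$ need sign control, which your residue computation $\alpha=-B_0(1/r)/\bigl(y_0\,\widetilde P'(y_0)\bigr)>0$ delivers; your sequence $q_t$ is the paper's $q(1,t)$ in disguise, and your sign analysis is the power-series mirror of $Q_t(r)=r^t\,Q_0(r)$. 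Your route buys an explicit closed form for the dominant Binet coefficient — essentially Theorem \ref{thm:principalCoefficient}, but requiring only that the principal root be simple rather than that all roots be distinct, so it even sidesteps the hypothesis that blocks the paper's Conjecture \ref{conj:fullBinetConj} setting — and it makes completely transparent where $\Gamma_m(r)>0$, positivity of $r$, and strict dominance each enter. The paper's route buys a constructive algorithm that halts at the first admissible step (the quantity $t_0$ analyzed in Section \ref{sec:runtime}) and invariants that are reused verbatim in the proof of Theorem \ref{thm:algorithm determination}; your "choose $M$ large enough" existence argument, by contrast, gives no handle on run-time. The only points deserving explicit care in a final write-up are the ones you already flag: $q_M>0$ for large $M$ (so the reversal bookkeeping is with respect to degree $M+k$), and the fact that the Binet expansion with polynomially weighted subdominant roots still yields $q_t/r^t\to\alpha$ when subdominant roots have multiplicity.
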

\begin{cor}
Given arbitrary integers $\gamma_1$ through $\gamma_m$ with $\Gamma_m(r)>0$, there is a recurrence derived from $P(x)$ which has first coefficients $\gamma_1$ through $\gamma_m$ with no negative coefficients thereafter.
\end{cor}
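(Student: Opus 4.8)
The plan is to read the Corollary off directly from Theorem~\ref{thm:algorithmdivisibility} by passing from the language of polynomials to the language of recurrences. Applying the theorem to the given data $\gamma_1,\dots,\gamma_m$ (which is legal precisely because $\Gamma_m(r)>0$) produces a polynomial $p(x)$, divisible by $P(x)$, whose first coefficients are $\gamma_1,\dots,\gamma_m$ and all of whose later coefficients are nonpositive. The goal is then to exhibit this $p(x)$, after a routine normalization, as the characteristic polynomial of an honest recurrence, at which point Definition~\ref{def:derivedfrom} certifies that the recurrence is derived from $P(x)$.

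First I would verify integrality, since the Corollary (unlike the Theorem) demands integer coefficients. The inputs $\gamma_i$ are integers and $P(x)$ is monic with integer coefficients, so the construction underlying the Theorem only ever subtracts integer multiples of shifts $x^jP(x)$ of a monic integer polynomial; equivalently, the cofactor $Q(x)=p(x)/P(x)$ lies in $\ZZ[x]$ because division by a monic polynomial over $\ZZ$ is exact. Either way $p(x)\in\ZZ[x]$, which is what guarantees that the recurrence we produce has integer coefficients and is a genuine (P/Z)LRR.

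The key step is the translation of signs. Comparing the defining shape of a characteristic polynomial in \eqref{eq:characteristicpolynomial}, namely $x^k-c_1x^{k-1}-\cdots-c_k$, with the recurrence $a_{n+1}=c_1a_n+\cdots+c_ka_{n+1-k}$, each recurrence coefficient $c_i$ is the negative of the corresponding coefficient of the characteristic polynomial. Consequently the Theorem's conclusion ``$p(x)$ has no positive coefficients beyond the $m$-th'' transforms into ``the associated recurrence has no negative coefficients beyond the $m$-th,'' while the first $m$ coefficients are carried over to $\gamma_1,\dots,\gamma_m$. Together with the divisibility $P(x)\mid p(x)$, Definition~\ref{def:derivedfrom} then says exactly that this recurrence is derived from $P(x)$, which is the desired conclusion.

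I expect the main obstacle to be the bookkeeping at the top of the polynomial: a characteristic polynomial must be monic, so one must track carefully how the leading coefficient and the monic normalization interact and align the indices so that the recurrence's first $m$ coefficients come out to be \emph{literally} $\gamma_1,\dots,\gamma_m$ with the correct signs. Once the sign convention of \eqref{eq:characteristicpolynomial} is applied consistently the remainder is purely mechanical, but this is the one place where a dropped sign or an off-by-one would corrupt the conclusion, so it merits the most care.
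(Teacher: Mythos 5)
Your high-level route is the paper's: feed $\gamma_1,\dots,\gamma_m$ into Theorem~\ref{thm:algorithmdivisibility}, check that the resulting $p(x)$ has integer coefficients, and read $p(x)$ as a characteristic polynomial. (On integrality, your first argument---the construction only ever subtracts integer multiples of the monic integer polynomial $P(x)$---is the right one and is all the paper itself says; the ``equivalently, division by a monic polynomial over $\mathbb{Z}$ is exact'' remark is circular, since exactness presupposes $p(x)\in\mathbb{Z}[x]$, but it is harmless.) The genuine gap is the step you defer as ``routine normalization'' and ``purely mechanical'' bookkeeping: it is neither, and it is where the content lies. By \eqref{eq:characteristicpolynomial}, passing from a characteristic polynomial to its recurrence negates \emph{every} coefficient after the leading one, so you cannot have the first $m$ coefficients ``carried over'' literally as $\gamma_1,\dots,\gamma_m$ while only the tail flips sign: if $p(x)$ has coefficients $\gamma_1,\dots,\gamma_m$ followed by non-positive numbers, the associated recurrence has coefficients $-\gamma_2,\dots,-\gamma_m$ followed by non-negative ones. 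The version of this corollary the paper actually proves (in Section~\ref{sec:conversion}) is worded to avoid exactly this: it assumes $\gamma_1=1$ and concludes that the \emph{characteristic polynomial} of the derived recurrence has first coefficients $\gamma_1,\dots,\gamma_m$ with no positive coefficients thereafter, so that $p(x)$, being monic, \emph{is} that characteristic polynomial and no sign translation is ever performed. Monicity is a hypothesis there, not a normalization one can do afterwards: for $\gamma_1\neq\pm 1$, dividing $p(x)$ by $\gamma_1$ destroys both integrality and the prescribed coefficients.

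Moreover, under your literal reading---a recurrence whose own coefficients are $\gamma_1,\dots,\gamma_m$ with non-negative coefficients thereafter---the statement is false under the hypothesis $\Gamma_m(r)>0$, so no amount of careful bookkeeping can complete your proof. Take $P(x)=x^2-x-1$, so $r=(1+\sqrt5)/2$, and $m=1$, $\gamma_1=2$; then $\Gamma_1(r)=2>0$, but any derived recurrence with first coefficient $2$ and no negative coefficients thereafter would have characteristic polynomial $h(x)=x^d-2x^{d-1}-c_2x^{d-2}-\cdots-c_d$ with all $c_i\ge 0$, whence $h(r)\le r^{d-1}(r-2)<0$, contradicting $P(x)\mid h(x)$. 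What your reading actually needs is the theorem applied to the $(m+1)$-tuple $(1,-\gamma_1,\dots,-\gamma_m)$, i.e., the hypothesis $r^m-\gamma_1r^{m-1}-\cdots-\gamma_m>0$, which is not implied by $\Gamma_m(r)>0$. In short, the introduction's phrasing of this corollary is loose; the provable target (and the one the paper proves, in one line) is the characteristic-polynomial formulation with $\gamma_1=1$, for which your first step already suffices and the sign issue you flagged as the main obstacle simply never arises.
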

\begin{cor}\label{cor: conversionBIG}
Every ZLRR has a derived PLRR.
\end{cor}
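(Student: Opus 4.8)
The plan is to deduce the statement directly from the integer version of Theorem~\ref{thm:algorithmdivisibility} (the Corollary immediately preceding), whose output is a recurrence with prescribed leading coefficients and no negative coefficients thereafter---which is nearly the definition of a PLRR. Recall that to exhibit a derived PLRR it suffices to produce a monic polynomial $p(x)$ divisible by $P(x)$ whose associated recurrence $a_{n+1} = d_1 a_n + \cdots + d_L a_{n+1-L}$ satisfies $d_1 > 0$, $d_L > 0$, and $d_i \geq 0$ for all $i$ (with the $d_i$ integers). The only obstruction to $P(x)$ itself already doing this is its vanishing top coefficients $c_1 = \cdots = c_s = 0$, and the Zeroing Algorithm is designed precisely to fill these in while keeping the remaining coefficients non-negative.

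First I would invoke Lemma~\ref{lem:greatestroot} to obtain the principal root $r$ of $P(x)$, which is positive by Definition~\ref{def:principal root}. I then apply the preceding Corollary with a single prescribed coefficient, taking $m = 1$ and $\gamma_1 = 1$. Since $r > 0$ we have $\Gamma_1(r) = \gamma_1 = 1 > 0$, so the hypothesis holds automatically, and we obtain a recurrence derived from $P(x)$, with characteristic polynomial $p(x) = P(x)Q(x)$ for some nonzero integer polynomial $Q(x)$, whose first coefficient is $d_1 = 1 > 0$ and with no negative coefficients thereafter. Hence every $d_i$ is a non-negative integer with $d_1 > 0$, and $\deg p = \deg P + \deg Q \geq 2$, so the recurrence is nontrivial.

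The one remaining PLRR condition is that the last coefficient $d_L$ be strictly positive, equivalently that the constant term $p(0) = P(0)\,Q(0)$ be nonzero. Here I would use that $P(0) = -c_L \neq 0$ for a ZLRR. If $Q(0) \neq 0$ then $p(0) \neq 0$, and being a non-negative recurrence coefficient this forces $d_L > 0$, so $p$ is the characteristic polynomial of a PLRR derived from the given ZLRR. If instead $p(0) = 0$, I would write $p(x) = x^j\,\tilde{p}(x)$ with $\tilde{p}(0) \neq 0$ and $j \geq 1$; since $P(0) \neq 0$ we have $\gcd(P(x), x^j) = 1$, so $P(x) \mid \tilde{p}(x)$ and $\tilde{p}(x)$ is still derived from $P(x)$. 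Stripping the trailing zeros leaves the higher-order coefficients untouched, so $\tilde{p}$ is monic with the same sign pattern (leading recurrence coefficient $1$, all others non-negative) but now with a strictly positive trailing coefficient; thus $\tilde{p}$ is the characteristic polynomial of a PLRR. In either case the ZLRR admits a derived PLRR.

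I expect the only delicate points to be the sign bookkeeping (polynomial coefficients versus recurrence coefficients) and the trailing-coefficient argument, since the substantive work---guaranteeing non-negativity of all but finitely many coefficients---is already carried by Theorem~\ref{thm:algorithmdivisibility}; the reduction of the $p(0)=0$ case hinges entirely on $P(0) \neq 0$, which is exactly the ZLRR requirement $c_L > 0$.
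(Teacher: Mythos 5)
There is a genuine gap, and it is fatal to your choice of $m$. In Theorem \ref{thm:algorithmdivisibility} the prescribed ``first coefficients'' $\gamma_1,\dots,\gamma_m$ are the \emph{leading coefficients of the polynomial} $p(x)$, not the coefficients of the associated recurrence. So with $m=1$, $\gamma_1=1$ you have only arranged that $p(x)$ is monic and that every coefficient below the top is $\le 0$, i.e.\ that every recurrence coefficient satisfies $d_i\ge 0$. You then write ``whose first coefficient is $d_1 = 1 > 0$,'' but that conflates the monic leading coefficient (your $\gamma_1$) with the first recurrence coefficient $d_1$, which is the \emph{negative} of the coefficient of $x^{\deg p -1}$ in $p(x)$. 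Nothing in your setup forces $d_1>0$ --- and that is precisely the one PLRR condition a ZLRR violates. Concretely, $p(x)=P(x)$ itself satisfies everything you asked for (monic, divisible by $P$, no positive coefficients after the leading one), and in fact the modified Zeroing Algorithm with $m=1$, $\gamma_1=1$ terminates immediately with output $P(x)$: one has $Q_1(x)=\gamma_1\,(P(x)-x^k)=-c_1x^{k-1}-\cdots-c_k$, which already has no positive coefficients, so $p_1(x)=x^k+Q_1(x)=P(x)$. Your argument can therefore hand you back the very ZLRR you started with, which is not a PLRR.

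The repair is exactly the paper's proof of Corollary \ref{cor: conversion}: take $m=2$, $\gamma_1=1$, $\gamma_2=-1$, so that the output polynomial begins $x^n - x^{n-1} - \cdots$, forcing $d_1=1>0$; the hypothesis $\Gamma_2(r)=r-1>0$ holds because the principal root of a ZLRR exceeds $1$. Two smaller remarks. First, your reduction showing it suffices to get $d_1>0$, $d_L>0$, $d_i\ge 0$ is correct, and your trailing-coefficient argument (stripping a factor $x^j$ using $P(0)=-c_L\ne 0$, so that $P(x)\mid \tilde p(x)$) is sound and actually patches a point the paper's own proof leaves implicit, namely that the algorithm's output could in principle end in zero coefficients. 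But that fix operates at the bottom of the polynomial and cannot rescue the $m=1$ choice, whose failure is at the top. Second, the ambiguity you fell into is partly in the paper's first corollary statement (which mixes recurrence-coefficient and polynomial-coefficient language); the version stated before Corollary \ref{cor: conversion} makes clear the $\gamma_i$ are polynomial coefficients.
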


We list some examples of ZLRRs with the derived PLRRs that are found with the Zeroing Algorithm in Appendix \ref{app:list}.\\

A natural question of interest that arises in the study of recurrences is the behavior of the size of terms in a recurrence sequence. The Fibonacci sequence behaves like a geometric sequence whose ratio is the golden ratio, and there is an analogous result for general linear recurrence sequences, proven in \cite{BBGILMT}.

\begin{theorem}\label{thm:binetexpansion}
Let $P(x)$ be the characteristic polynomial of some linear recurrence relation, and let the roots of $P(x)$ be denoted as $r_1,r_2,\dots,r_j$, with multiplicities $m_1,m_2,\dots,m_j\ge1$, respectively. \\

Consider a sequence $\{a_n\}_{n=1}^{\infty}$ of complex numbers satisfying the recurrence relation. Then there exist polynomials $q_1,q_2,\dots,q_j$, with $\deg(q_i)\le m_i-1$, such that
\begin{equation}\label{eq:binetexpansion}
    a_n\ = \ q_1(n)\,r_1^n+q_2(n)\,r_2^n+\cdots+q_j(n)\,r_j^n.
\end{equation}
\end{theorem}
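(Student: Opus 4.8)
The plan is to treat the set $V$ of all complex sequences satisfying the recurrence as a vector space and to exhibit an explicit basis consisting of the functions $n \mapsto n^{\ell}\,r_i^{\,n}$. Writing $k = \deg P = \sum_i m_i$ for the order of the recurrence, I would first observe that a solution $\{a_n\}$ is completely and freely determined by its first $k$ values, since the relation expresses each later term in terms of its $k$ predecessors; the map $\{a_n\}\mapsto(a_1,\dots,a_k)$ is then a linear isomorphism $V\to\CC^{k}$, so $\dim_{\CC} V = k$. It therefore suffices to produce $k$ linearly independent solutions of the stated form: they will automatically span $V$, so that an arbitrary solution becomes a combination $\sum_i q_i(n)\,r_i^{\,n}$, and grouping the basis elements sharing a base $r_i$ forces $\deg q_i \le m_i - 1$.

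Next I would recast the recurrence using the shift operator $E$, defined by $(Ea)_n = a_{n+1}$. After reindexing, the relation reads $P(E)a = 0$, and over $\CC$ we factor $P(E) = \prod_i (E - r_i)^{m_i}$ into commuting factors. The heart of the computation is the identity $(E - r)(g(n)\,r^{\,n}) = r^{\,n+1}(\Delta g)(n)$, where $\Delta g(n) = g(n+1) - g(n)$ is the forward difference; iterating yields $(E - r)^{s}(g(n)\,r^{\,n}) = r^{\,n+s}(\Delta^{s} g)(n)$. Since $\Delta$ lowers the degree of a polynomial by one, the operator $(E - r_i)^{m_i}$ annihilates $n^{\ell}\,r_i^{\,n}$ whenever $\ell < m_i$, and because the factors of $P(E)$ commute, each such $n^{\ell}\,r_i^{\,n}$ lies in $V$. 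This furnishes exactly $\sum_i m_i = k$ candidate solutions. Here I use that the roots are nonzero, which holds for any PLRR/ZLRR since the final coefficient is positive and hence $P(0) \ne 0$.

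The remaining and most delicate step is linear independence of these $k$ sequences. Suppose $\sum_i p_i(n)\,r_i^{\,n} \equiv 0$ with $\deg p_i < m_i$. To isolate a single base $r_{i_0}$, I would apply $\prod_{i \ne i_0}(E - r_i)^{m_i}$ to the identity: each summand with $i \ne i_0$ is killed by its own factor, while on the $i_0$-term one checks that $(E - r_i)(g(n)\,r_{i_0}^{\,n}) = r_{i_0}^{\,n}\,(r_{i_0}\,g(n+1) - r_i\,g(n))$ preserves the polynomial degree and scales its leading coefficient by the nonzero constant $r_{i_0} - r_i$. The identity thus collapses to $\widetilde{p}_{i_0}(n)\,r_{i_0}^{\,n} \equiv 0$ with $\deg \widetilde{p}_{i_0} = \deg p_{i_0}$; since $r_{i_0} \ne 0$ this forces $\widetilde{p}_{i_0} \equiv 0$, hence $p_{i_0} \equiv 0$. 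Running over all $i_0$ gives independence, so the $k$ solutions form a basis of $V$ and every solution admits the expansion \eqref{eq:binetexpansion} with $q_i(n) = \sum_{\ell=0}^{m_i-1} c_{i\ell}\,n^{\ell}$.

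I expect the main obstacle to be organizing the independence argument cleanly rather than the existence of the candidate solutions; the shift-operator-and-difference calculus makes both halves uniform, so the genuine work lies in the degree-preservation and degree-dropping bookkeeping for $(E-r)$, carried out precisely enough to conclude. An alternative route, via partial-fraction decomposition of the rational generating function $\sum_n a_n x^{n}$ (whose denominator is the reciprocal polynomial $x^{k}P(1/x)$), would reach the same conclusion, but the vector-space argument above is more transparent about the degree bounds $\deg q_i \le m_i - 1$.
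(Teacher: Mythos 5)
Your proof is correct, but there is no in-paper argument to compare it against: the paper states Theorem \ref{thm:binetexpansion} and cites \cite{BBGILMT} for its proof (with \cite{Go, MT-B} as background references), so you have supplied a self-contained proof where the paper offers none. Your route is the classical linear-algebra one: the solution set $V$ is a $k$-dimensional complex vector space via the isomorphism onto initial values $(a_1,\dots,a_k)$; the factorization $P(E)=\prod_i(E-r_i)^{m_i}$ into commuting operators together with the identity $(E-r)^s\bigl(g(n)\,r^n\bigr)=r^{n+s}(\Delta^s g)(n)$ shows each $n^\ell r_i^n$ with $\ell<m_i$ solves the recurrence; and the annihilation-plus-degree-preservation argument (applying $\prod_{i\ne i_0}(E-r_i)^{m_i}$ and noting the leading coefficient is scaled by the nonzero constants $r_{i_0}-r_i$) gives independence of these $k$ sequences, hence a basis, hence the expansion with $\deg q_i\le m_i-1$. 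All steps check out. One point worth keeping explicit: your remark that the roots must be nonzero is not cosmetic. For a completely general recurrence in the paper's notation with $c_k=0$, so that $0$ is a root of $P$, the statement as literally written can fail --- e.g., $a_{n+1}=a_n+0\cdot a_{n-1}$ admits solutions with $a_1\ne a_2$, which no expression $q_1(n)\,0^n+q_2(n)\,1^n$ matches at $n=1$ --- so the expansion genuinely requires $P(0)\ne0$. This holds for every PLRR/ZLRR because the last coefficient is positive, and since the paper only applies the theorem to such polynomials, your restriction is exactly the right repair of the slightly overbroad statement.
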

\begin{definition}
We call \eqref{eq:binetexpansion} the \textbf{Binet expansion} of the sequence $\{a_n\}_{n\,=\,1}^{\infty}$, in analogy to the Binet Formula that provides a closed form for Fibonacci numbers.
\end{definition}

If given a PLRR/ZLRR with some real initial values, one might ask whether the terms eventually diverge to positive infinity or negative infinity. \footnote{Note that we allow the initial values to be arbitrary real numbers, which would result in the sequence taking on one of three behaviors: diverging to $+\infty$, diverging to $-\infty$, or oscillating in sign and having magnitude $o(r^n)$.} One approach is to compute as many terms as needed for the eventual behavior to emerge; unfortunately, this could be very time-consuming. One could alternately solve for the Binet expansion, which often requires an excessive amount of computation. \\

The fact that the characteristic polynomials for PLRR/ZLRRs have a principal root $r$ allows for a shortcut. Consider the Binet expansion of a ZLRS/PLRS; the coefficient attached to the $r^n$ term, whenever nonzero, indicates the direction of divergence. We develop the following method to determine the sign of this coefficient from the initial values of the recurrence sequence.

\begin{theorem}\label{thm:algorithm determination}
Given a ZLRS/PLRS $\{a_n\}_{n\,=\,1}^{\infty}$ with characteristic polynomial $P(x)$ and real initial values $a_1,a_2,\dots,a_k$, consider the Binet expansion of $\{a_n\}_{n\,=\,1}^{\infty}$. The sign of the coefficient attached to $r^n$ equals the sign of
\begin{equation}
    Q(x)\ := \ a_1\,x^{k-1}+(a_2-d_2)\,x^{k-2}+(a_3-d_3)\,x^{k-3}+\cdots+(a_k-d_k)
\end{equation}
evaluated at $x=r$, where
\begin{equation}
    d_i\ = \ a_1\,c_{i-1}+a_2\,c_{i-2}+\cdots+a_{i-1}\,c_1\ = \ \sum_{j\,=\,1}^{i-1}\,a_j\,c_{i-j}.
\end{equation}
\end{theorem}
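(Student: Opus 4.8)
The plan is to pass to the generating function of the sequence, identify the Binet coefficient of $r^n$ as a residue at a simple pole, and then track the sign of that residue. Let $A(x) = \sum_{n=1}^\infty a_n x^n$ and let $P^*(x) = x^k P(1/x) = 1 - c_1 x - \cdots - c_k x^k$ be the reversed characteristic polynomial, where $k$ is the order of the recurrence. The first step is to expand the product $A(x)\,P^*(x)$: the recurrence forces the coefficient of $x^m$ to vanish for every $m > k$, so the product is a polynomial of degree at most $k$. Computing the surviving coefficients and recognizing the convolution $\sum_i c_i\,a_{m-i}$ as precisely $d_m$, I expect to obtain
\begin{equation}
    A(x)\,P^*(x)\ =\ N(x)\ :=\ a_1 x + (a_2 - d_2)\,x^2 + \cdots + (a_k - d_k)\,x^k,
\end{equation}
so that $A(x) = N(x)/P^*(x)$. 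Two bookkeeping identities will matter: $N$ is the reversal of $Q$, namely $Q(x) = x^k N(1/x)$, and $P^*$ factors as $P^*(x) = \prod_i (1 - r_i x)^{m_i}$, where the $r_i$ are the roots of $P$ with multiplicities $m_i$ (so $1/r_i$ are the poles of $A$).

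Next I would isolate the pole coming from the principal root $r$. Since $r$ is a simple root of $P$ (Lemma \ref{lem:greatestroot}), $1/r$ is a simple root of $P^*$, and the corresponding partial-fraction term is $C/(1-rx)$ with
\begin{equation}
    C\ =\ \lim_{x \to 1/r}(1 - rx)\,A(x)\ =\ \frac{N(1/r)}{\prod_{i \geq 2}(1 - r_i/r)^{m_i}}.
\end{equation}
Expanding $C/(1-rx) = C\sum_n r^n x^n$ shows this term contributes exactly $C\,r^n$ to $a_n$, while every remaining partial-fraction term contributes coefficients that grow strictly slower than $r^n$; by uniqueness of the Binet expansion (Theorem \ref{thm:binetexpansion}) this $C$ is therefore the coefficient attached to $r^n$, and equivalently $C = \lim_{n\to\infty} a_n/r^n$. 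Since $N(1/r) = r^{-k}\,Q(r)$ and $r^{-k} > 0$, the numerator of $C$ has the same sign as $Q(r)$.

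The crux of the argument, and the step I expect to require the most care, is showing that the denominator $\prod_{i \geq 2}(1 - r_i/r)^{m_i}$ is strictly positive, so that it cannot flip the sign. Here I would invoke the defining property of the principal root: every other root satisfies $|r_i| < r$. A real root then gives a factor $1 - r_i/r > 0$, because $r_i/r \le |r_i|/r < 1$; and each non-real root is paired with its complex conjugate of equal multiplicity, so that $(1 - r_i/r)^{m_i}(1 - \overline{r_i}/r)^{m_i} = |1 - r_i/r|^{2 m_i} > 0$. Multiplying these positive contributions shows the denominator is positive, whence $\sgn(C) = \sgn(Q(r))$, as claimed. The generating-function identity is the main computational setup, but the positivity of this product is the real content, and it is exactly where the principal-root hypothesis is used.
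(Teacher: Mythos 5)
Your proof is correct, but it takes a genuinely different route from the paper's. The paper stays entirely inside the Zeroing Algorithm machinery: it sets $Q_0(x)=Q(x)$, invokes the lemma that the sign of the $r^t$-coefficient in the Binet expansion of $q(1,t)$ equals the sign of $Q_0(r)$, and then verifies by strong induction on the coefficient recurrences \eqref{eq: coeffrecur} that $q(1,t-1)=a_t$ for $1\le t\le k$, so the algorithm's top-coefficient sequence \emph{is} the given sequence and the sign statement transfers. You instead argue classically: the convolution identity $A(x)P^*(x)=N(x)$ with $P^*(x)=x^kP(1/x)$, the factorization $P^*(x)=\prod_i(1-r_ix)^{m_i}$, and extraction of the simple pole at $1/r$, reducing everything to the positivity of $\prod_{i\ge2}(1-r_i/r)^{m_i}$, which you obtain from $|r_i|<r$ for real roots and conjugate pairing for complex ones. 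Two bookkeeping points to nail down: since $\deg N$ can equal $\deg P^*=k$, the partial-fraction decomposition carries an additive constant, which is harmless because it only affects the $n=0$ coefficient; and the strict dominance $|r_i|<r$ is Lemma \ref{lem:greatestroot}(3), whose gcd hypothesis holds automatically here (ZLRRs assume it by definition, and PLRRs have $c_1>0$, so $1\in S$). What your route buys is strictly more than the theorem asks: you get the exact principal coefficient $C=Q(r)/\bigl(r\prod_{i\ge2}(r-r_i)^{m_i}\bigr)$, with no restriction to simple roots. Translating indices via the identification $q(1,t-1)=a_t$ established in the paper's own proof (so the principal coefficient of $q(1,t)$ is $rC=Q_0(r)/\prod_{i\ge2}(r-r_i)^{m_i}$), your computation reproves Theorem \ref{thm:principalCoefficient} without Cramer's rule or punctured Vandermonde determinants, and in fact settles Conjecture \ref{conj:fullBinetConj} in full generality. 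What the paper's route buys is economy and cohesion: it needs no generating functions or partial fractions, reusing only lemmas already built for the algorithm, and it ties the divergence test directly to the run-time analysis of Section \ref{sec:runtime}.
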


\par In Section \ref{sec:runtime} we investigate the run-time of the Zeroing Algorithm, and discover that the run-time depends exclusively on the initial configuration of the algorithm. Particularly, we show that ZLRRs with principal root closest to $1$ will take the longest to be converted into a derived PLRR by the Zeroing Algorithm. We conclude in Section \ref{sec:conclusion} with some open questions for future research.


\section{Eventual Behavior of Linear Recurrence Sequences}\label{sec:tools}

In this section, we introduce important lemmas related to the roots of characteristic polynomials. In the celebrated Binet's Formula for Fibonacci numbers, the principal root of its characteristic polynomial (i.e., the golden ratio) determines the behavior of the sequence as nearly geometric, with the golden ratio being the common ratio. We extend this characterization of near-geometric behavior to more general linear recurrences.

\subsection{Properties of Characteristic Polynomials}

Lemma \ref{lem:monotonicallyincreasinglessstrong} characterizes the limiting behavior of recurrence relations of the form \eqref{eq:recurrence}, with $c_i$ non-negative integers for $1\leq i\leq k$ and $c_k>0$. We first state important consequences of the definition of the principal root.

\begin{lemma}\label{lem:greatestroot}
Consider $P(x)$ as in \eqref{eq:characteristicpolynomial} and let $S:=\{m \, \mid \, c_m \ne 0\}$. Then
\begin{enumerate}
    \item there exists exactly one positive root $r$, and this root has multiplicity $1$,
    \item every root $z \in \mathbb{C}$ satisfies $|z|\le r$, and
    \item if $\gcd(S) = 1$,\footnote{Note that this is Condition 2 from Definition \ref{def:zlrrdefinition}, and thus met by all $s$-deep ZLRRs.} then $r$ is the unique root of greatest magnitude.
\end{enumerate}
\end{lemma}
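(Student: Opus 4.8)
The plan is to reduce all three claims to the behavior of a single auxiliary function. Since $c_k\neq 0$ we have $P(0)=-c_k\neq 0$, so $0$ is not a root and we may divide by $x^k$: for $x\neq 0$ write $P(x)=x^k\bigl(1-g(x)\bigr)$, where
\[
g(x)\ :=\ \frac{c_1}{x}+\frac{c_2}{x^2}+\cdots+\frac{c_k}{x^k}.
\]
On $(0,\infty)$ each summand $c_ix^{-i}$ is non-increasing, and $c_kx^{-k}$ is strictly decreasing because $c_k>0$; hence $g$ is strictly decreasing there. Moreover $g(x)\to+\infty$ as $x\to 0^+$ (again because $c_k>0$) and $g(x)\to 0$ as $x\to\infty$. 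A positive number $x$ is a root of $P$ exactly when $g(x)=1$.

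For part (1), strict monotonicity together with the two limits and the intermediate value theorem yields a unique $r>0$ with $g(r)=1$, i.e.\ a unique positive root. To see it is simple, differentiate $P(x)=x^k(1-g(x))$ at $r$: since $1-g(r)=0$, the product rule gives $P'(r)=-r^k g'(r)$, and $g'(r)<0$ (indeed $g'<0$ throughout $(0,\infty)$ since $c_k>0$) forces $P'(r)>0$, so $r$ has multiplicity $1$. For part (2), let $z$ be any root; then $z\neq 0$ and $z^k=\sum_{i=1}^k c_i z^{k-i}$, so by the triangle inequality and $c_i\ge 0$,
\[
|z|^k\ \le\ \sum_{i=1}^k c_i |z|^{k-i},\qquad\text{equivalently}\qquad g(|z|)\ \ge\ 1\ =\ g(r).
\]
Since $g$ is strictly decreasing, $g(|z|)\ge g(r)$ forces $|z|\le r$.

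The main work is part (3), where I would extract sharp information from the equality case of the triangle inequality. Suppose $|z|=r$; then every inequality in the part-(2) chain is an equality, so in particular $\bigl|\sum_{i\in S} c_i z^{k-i}\bigr|=\sum_{i\in S} c_i|z^{k-i}|$. Because each $c_i>0$ and each $z^{k-i}\neq 0$, equality in the triangle inequality forces all the terms $c_i z^{k-i}$ with $i\in S$ to share a common argument, which must equal $\arg(z^k)$ since their sum is $z^k$. Writing $z=re^{i\theta}$, the condition $\arg(z^{k-i})=\arg(z^k)$ becomes $i\theta\equiv 0\pmod{2\pi}$ for every $i\in S$. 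Now I invoke $\gcd(S)=1$: by B\'ezout there are integers $n_i$ with $\sum_{i\in S} n_i\, i=1$, whence $\theta=\sum_{i\in S} n_i(i\theta)\equiv 0\pmod{2\pi}$, so $z=r$. This is the delicate step — the arithmetic hypothesis $\gcd(S)=1$ is exactly what rules out roots of modulus $r$ sitting at nontrivial roots of unity times $r$ (the periodic obstruction flagged after Definition \ref{def:zlrrdefinition}), and pinning down the equality case of the triangle inequality precisely is where the argument must be airtight.
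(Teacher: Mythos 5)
Your proof is correct, and it is organized differently from the paper's. The paper proves part (1) by citing Descartes's Rule of Signs, whereas you obtain existence, uniqueness, and simplicity of $r$ from the strict monotonicity of the auxiliary function $g(x)=\sum_i c_i x^{-i}$ together with the computation $P'(r)=-r^k g'(r)>0$; this makes the lemma self-contained. For part (2) the two arguments are essentially the same triangle-inequality computation, though the paper phrases the conclusion as $P(|z|)\le 0$ and invokes the Intermediate Value Theorem, while you simply reuse the monotonicity of $g$. The genuine divergence is in part (3). The paper anchors the equality case of the triangle inequality at the constant term $c_k$ (which has argument $0$), concludes that every $z^{k-j}$ with $j\in S\cup\{0\}$ is a positive real, writes $\theta=2\pi d'/k'$ in lowest terms, and then runs a divisibility argument on the set $K=\{k-j \mid j\in S\cup\{0\}\}$ (including a footnote justifying $\gcd(K)=1$) to force $k'=1$. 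You instead anchor the common argument at the sum $z^k$ itself, which turns the equality case directly into the congruences $i\theta\equiv 0\pmod{2\pi}$ for $i\in S$, after which a single application of B\'ezout's identity with $\gcd(S)=1$ gives $\theta\equiv 0\pmod{2\pi}$, hence $z=r$. Your route is shorter and avoids the bookkeeping with the auxiliary sets $J$ and $K$; the paper's route makes the geometric picture (all summands positive reals) more explicit. Both arguments are airtight.
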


\begin{proof}
By Descartes's Rule of Signs, $P(x)$ has exactly one positive root of multiplicity one, completing the proof of Part (1).\\

Now, consider any root $z\in\mathbb{C}$ of $P(x)$; we have $z^k=c_1z^{k-1}+c_2z^{k-2}+\cdots+c_k$. Taking the magnitude, we have
\begin{align}
|z|^k\ = \ |z^k|\ = \ |c_1z^{k-1}+c_2z^{k-2}+\cdots+c_k|&\ \le\ |c_1z^{k-1}|+|c_2z^{k-2}|+\cdots+|c_k|\nonumber\\
&\ = \ c_1|z|^{k-1}+c_2|z|^{k-2}+\cdots+c_k,
\end{align}
which implies $P(|z|)\le0$. Since $P(x)$ becomes arbitrarily large with large values of $x$, we see that there is a positive root at or above $|z|$ by the Intermediate Value Theorem, which completes the proof of Part (2). \\

Finally, suppose $\gcd(S) = 1$. Suppose to the contrary that a non-positive root $z$ satisfies $|z| = r$; we must have $P(|z|)=0$, which implies
\begin{equation}
    |z^k|\ = \ |c_1\,z^{k-1}+c_2\,z^{k-2}+\cdots+c_k|\ = \ |c_1\,z^{k-1}|+|c_2\,z^{k-2}|+\cdots+|c_k|.
\end{equation}
This equality holds only if the complex numbers $c_1\,z^{k-1},c_2\,z^{k-2},\dots,c_k$ share the same argument; since $c_k>0$, $z^{k-j}$ must be positive for all $c_j\ne0$. This implies $z^k$, as a sum of positive numbers, is positive as well. Writing $z=|z|\,e^{i\theta}$, we see that the positivity of $z^k=|z|^k\,e^{ik\theta}$ implies $k\theta$ is a multiple of $2\pi$, and consequently, $\theta = 2\pi d / k$ for some integer $d$. We may reduce this to $2\pi d' / k'$ for relatively prime $d',k'$.\\

Let $J:=S\cup\{0\}$. Since $z^{k-j}$ is positive for all $j\in J$, we see that $2\pi d'\,(k-j)/k'$ is an integer multiple of $2\pi$, so $k'$ divides $d'\,(k-j)$; as $d'$ and $k'$ are relatively prime we have $k'$ divides $k-j$. Since the elements of $J$ have greatest common divisor 1, so do\footnote{Observe that $k$ is in both $J$ and $K$. Suppose to the contrary that some $q>1$ divides every element of $K$; then, every element of $\{k-\kappa\mid\kappa\in K\}=J$ is divisible by $q$, which is impossible.}
the elements of $K:=\{k-j\mid j\in J\}$. Since $k'$ divides every element of $K$, we must have $k'=1$, so $\theta=2\pi d'$ and thus $z$ is a positive root. This is a contradiction, completing the proof of Part (3).
\end{proof}

Next, we state a lemma that sheds light on the growth rate of the terms of a ZLRR/PLRR with a specific set of initial values.

\begin{lemma}\label{lem:monotonicallyincreasinglessstrong}
For a PLRR/ZLRR, let $r$ be the principal root of its characteristic polynomial $P(x)$. Then, given initial values $a_i=0$ for $0\le i\le k-2$, $a_{k-1}=1$, we have
\begin{equation}
    \lim_{n\to\infty} \frac{a_n}{r^n} \ = \  C,
\end{equation}
where $C > 0$. Furthermore, the sequence $\{a_n\}_{n\,=\,1}^{\infty}$ is eventually monotonically increasing.
\end{lemma}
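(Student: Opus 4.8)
The plan is to read the claimed limit directly off the Binet expansion of Theorem \ref{thm:binetexpansion}, and then to use the specific initial data to pin down the sign of the leading coefficient. Write the principal root as $r=r_1$. By Lemma \ref{lem:greatestroot} it has multiplicity one and is strictly larger in magnitude than every other root, so in the expansion $a_n=q_1(n)r_1^n+\cdots+q_j(n)r_j^n$ the polynomial $q_1$ has degree $\le 0$, i.e.\ is a constant $C:=q_1$. Dividing by $r^n$ gives
\[
\frac{a_n}{r^n}\;=\;C\;+\;\sum_{i=2}^{j}q_i(n)\Big(\frac{r_i}{r}\Big)^n,
\]
and since $|r_i/r|<1$ each summand is a polynomial times a geometrically decaying factor, hence tends to $0$. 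Thus $\lim_{n\to\infty}a_n/r^n=C$, which settles the existence of the limit; the real content of the first assertion is that $C>0$.

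First I would note that the prescribed data make $\{a_n\}$ the \emph{fundamental solution}: an induction on the recurrence, using $c_i\ge 0$ and non-negative initial values $a_0=\cdots=a_{k-2}=0,\ a_{k-1}=1$, shows $a_n\ge 0$ for all $n$, whence $a_n/r^n\ge 0$ and $C\ge 0$. The genuine obstacle is excluding $C=0$. The cleanest self-contained route is generating-functional: our sequence is a shift of the coefficients $b_m$ of $1/g(x)$, where $g(x)=1-c_1x-\cdots-c_kx^k=x^kP(1/x)$, and the dominant singularity of $1/g$ is the \emph{simple} pole at $x=1/r$ (simple because $r$ is a simple root of $P$). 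Since $g'(1/r)<0$, being minus a sum of non-negative terms not all zero, the residue computation yields $\lim_m b_m/r^m=-r/g'(1/r)>0$, hence $C>0$. Equivalently and more conceptually, $g(1/r)=0$ says $\sum_i c_i r^{-i}=1$, so $p_i:=c_ir^{-i}$ is a probability distribution on $S=\{i:c_i\ne0\}$; then $u_m:=b_m/r^m$ satisfies the renewal equation $u_m=\sum_i p_i u_{m-i}$, which is aperiodic precisely because $\gcd(S)=1$, so the renewal theorem gives $u_m\to 1/\mu>0$ with $\mu=\sum_i i\,c_i r^{-i}$. Alternatively one may simply invoke Theorem \ref{thm:algorithm determination}: after the harmless shift to its indexing, every $d_i$ vanishes and $Q(x)\equiv 1$, so $Q(r)=1>0$ forces the sign of $C$ to be positive. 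I expect this positivity to be the main difficulty, so the availability of several independent arguments is reassuring.

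Finally, eventual monotonicity follows painlessly once $C>0$ is in hand. Since $a_n/r^n\to C>0$, the terms are eventually positive, and
\[
\frac{a_{n+1}}{a_n}\;=\;r\cdot\frac{a_{n+1}/r^{n+1}}{a_n/r^n}\;\longrightarrow\;r\cdot\frac{C}{C}\;=\;r.
\]
For every non-degenerate PLRR/ZLRR one checks $P(1)=1-\sum_i c_i<0$ (indeed $\sum_i c_i\ge 2$, since $\gcd(S)=1$ with $1\notin S$ forces $|S|\ge 2$ in the ZLRR case, while $c_1,c_L>0$ with $L\ge 2$ does so in the PLRR case), so the unique positive root satisfies $r>1$. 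Hence $a_{n+1}/a_n\to r>1$ gives $a_{n+1}>a_n$ for all large $n$, i.e.\ $\{a_n\}$ is eventually strictly increasing; the sole exception is the trivial recurrence $a_{n+1}=a_n$, where the sequence is constant and the conclusion holds weakly.
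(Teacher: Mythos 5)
Your proof is correct, and it diverges from the paper's own proof in exactly the two places that matter. The paper opens the same way you do---Binet expansion (Theorem \ref{thm:binetexpansion}), $q_1$ constant because $r$ is simple, subdominant terms vanishing after division by $r^n$---but then disposes of positivity in a single clause: $q_1$ ``must be positive, since the sequence $a_n$ does not admit negative terms.'' As you observe, non-negativity of the $a_n$ only yields $C\ge 0$; excluding $C=0$ is the real content, and the paper's proof of this lemma leaves that gap open. Your generating-function argument---$a_n$ as shifted coefficients of $1/g(x)$ with $g(x)=x^kP(1/x)$, a simple dominant pole at $1/r$ (this is precisely where Lemma \ref{lem:greatestroot}(3), hence $\gcd(S)=1$, enters), and residue $-r/g'(1/r)>0$---closes it completely; the renewal-theorem phrasing and the appeal to Theorem \ref{thm:algorithm determination} are legitimate alternatives (the latter is proved later in the paper via the Zeroing Algorithm, but its proof does not invoke this lemma, so no circularity arises, only an awkward ordering). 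For monotonicity the paper expands $A_n=a_{n+1}-a_n$ in Binet form and argues that the term $q_1(r-1)r^n$ dominates, which silently requires $r>1$---a fact never verified there; your route via $a_{n+1}/a_n\to r$ together with the explicit check $P(1)=1-\sum_i c_i<0$ (so $r>1$) supplies that missing step, and you also flag the one genuine degenerate case $a_{n+1}=a_n$ (a legal PLRR with $L=1$, $c_1=1$) where strict increase fails, which the paper overlooks. In short: the paper's proof is shorter and stays entirely inside the Binet expansion, but is incomplete at the two key points; yours is longer and actually complete, at the cost of importing standard analytic facts (singularity analysis or the renewal theorem) that the paper does not otherwise use.
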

\begin{proof}
Since $r$ has multiplicity $1$, $q_1$ is a constant polynomial. To see geometric behavior, we note that
\begin{equation}
    \lim_{n\to\infty}\frac{a_n}{r^n}\ = \ \lim_{n\to\infty}q_1(n)\,\left(\frac{r^n}{r^n}\right)+\lim_{n\to\infty}q_2(n)\,\left(\frac{r_2}{r}\right)^n+\cdots+\lim_{n\to\infty}q_j(n)\,\left(\frac{r_j}{r}\right)^n.
\end{equation}
Since $|r|\,>|r_i|$ for all $2\le i\le j$, each limit with a $(r_i/r)^n$ term disappears, leaving just $q_1$, which must be positive, since the sequence $a_n$ does not admit negative terms.\\

To see that $a_n$ is eventually increasing, consider the sequence
\begin{align}
    A_n&\ := \ a_{n+1}-a_n\nonumber\\
    &\ \  = \ (q_1r_1-q_1)\,r_1^n+\left(q_2(n+1)\,r_2-q_2(n)\right)\,r_2^n+\cdots+\left(q_j(n+1)\,r_j-q_j(n)\right)\,r_j^n.
\end{align}

A similar analysis shows
\begin{equation}
    \lim_{n\,\to\,\infty}\frac{\left(q_2(n+1)\,r_2-q_2(n)\right)\,r_2^n+\cdots+\left(q_j(n+1)\,r_j-q_j(n)\right)\,r_j^n}{\left(q_1\,r_1-q_1\right)\,r_1^n}\ = \ 0,
\end{equation}
implying that the term $(q_1r_1-q_1)\,r_1^n$ grows faster than the sum of the other terms; thus $A_n$ is eventually positive as desired.
\end{proof}

\begin{corollary}\label{cor:monotonicallyincreasing}
For a PLRR/ZLRR, let $r$ be the principal root of its characteristic polynomial $P(x)$. Then, given initial values satisfying $a_i\geq0$ for $0\le i\le k-1$ and $a_i > 0$ for some $0\le i \le k-1$, we have
\begin{equation}
    \lim_{n\,\to\,\infty} \frac{a_n}{r^n} \ = \  C,
\end{equation}
where $C > 0$. Furthermore, the sequence $\{a_n\}_{n\,=\,1}^{\infty}$ is eventually monotonically increasing. That is, Lemma \ref{lem:monotonicallyincreasinglessstrong} extends to any set of non-negative initial values that are not all zero.
\end{corollary}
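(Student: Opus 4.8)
The plan is to reduce the whole statement to a single claim: that the leading Binet coefficient $C=q_1$, the constant attached to $r^n$, is strictly positive. Once this is established, the eventual monotonic increase follows verbatim from the computation in the proof of Lemma~\ref{lem:monotonicallyincreasinglessstrong}: forming $A_n:=a_{n+1}-a_n$, its Binet expansion again has leading term $q_1(r-1)\,r^n$, whose sign depends only on $q_1>0$ (and on $r>1$, exactly as in the lemma). Thus the only genuinely new work is pinning down the sign of $C$, and the only difference from the lemma is that the initial data is now an arbitrary non-negative, not-all-zero vector rather than the single impulse $a_{k-1}=1$.

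First I would record two easy facts. Because $r$ is the principal root (Lemma~\ref{lem:greatestroot}, so $r$ is simple and strictly dominates every other root in modulus), the Binet expansion (Theorem~\ref{thm:binetexpansion}) gives $a_n/r^n\to q_1$, so the limit $C$ exists and equals $q_1$. Next, since every recurrence coefficient $c_i$ is non-negative and the initial values are non-negative, an immediate induction shows $a_n\ge 0$ for all $n$; dividing by $r^n$ and passing to the limit yields $C=q_1\ge 0$. The entire difficulty is therefore concentrated in excluding the degenerate case $q_1=0$.

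To get strict positivity I would run a monotone-comparison (domination) argument anchored on Lemma~\ref{lem:monotonicallyincreasinglessstrong}. The key structural observation is that, with non-negative coefficients, the solution map is order-preserving: if two solutions satisfy $x_{N+i}\ge y_{N+i}$ on a window of $k$ consecutive indices $0\le i\le k-1$, then $x_n\ge y_n$ for all $n\ge N$, by induction on the recurrence. Let $\{b_n\}$ be the impulse solution of Lemma~\ref{lem:monotonicallyincreasinglessstrong}, for which $b_m/r^m\to C_b>0$. I would first locate an index $p\ge k-1$ with $a_p>0$: if instead $a_n=0$ for all $n\ge k-1$, then for each $n\ge k$ the identity $0=a_n=\sum_{i=1}^{k}c_i\,a_{n-i}$ forces $c_k\,a_{n-k}=0$, and since the final coefficient of a PLRR/ZLRR is positive ($c_k=c_L>0$), this drives every $a_j$ to zero, contradicting that the data is not all zero. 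With such a $p$ in hand, set $N:=p-(k-1)$ and $\beta:=a_p>0$, and compare $\{a_n\}$ with the shifted, scaled solution $\{\beta\,b_{n-N}\}$ on the window $[N,N+k-1]$: there $b_{n-N}$ vanishes except at the top index, where it equals $1$, so the non-negativity of the $a_n$ together with the choice $\beta=a_p$ gives $a_{N+i}\ge\beta\,b_i$ for $0\le i\le k-1$. Propagating the inequality, $a_n\ge\beta\,b_{n-N}$ for all $n\ge N$, whence
\[
\liminf_{n\to\infty}\frac{a_n}{r^n}\ \ge\ \beta\,r^{-N}\lim_{m\to\infty}\frac{b_m}{r^m}\ =\ \beta\,r^{-N}C_b\ >\ 0,
\]
which forces $C=q_1>0$.

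The main obstacle is precisely this last step: non-negativity alone yields only $C\ge 0$, and ruling out the vanishing of the leading coefficient is where the arithmetic of the recurrence must enter — through the positivity of the final coefficient (to find the anchor index $p$) and, upstream, through the $\gcd$ hypothesis that makes $r$ strictly dominant via Lemma~\ref{lem:greatestroot} (to guarantee the limit is genuinely the $r^n$-coefficient). I expect the comparison setup — choosing the right window and verifying the order relation at the $k$ anchor indices so that it propagates — to demand the most care, while the reduction of monotonicity to the lemma and the non-negativity induction are routine.
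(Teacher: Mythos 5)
Your proof is correct and takes essentially the same route as the paper's: both arguments pin down the sign of the leading Binet coefficient by comparing the given sequence, after an index shift, against the impulse solution of Lemma~\ref{lem:monotonicallyincreasinglessstrong}, using that a recurrence with non-negative coefficients preserves termwise domination once it holds on a window of $k$ consecutive indices. The only cosmetic differences are that you shift and scale the impulse solution rather than shifting the given sequence, and you locate the positive anchor term by a contradiction argument from $c_k>0$ (which works for real data) where the paper uses the integrality-based bound $a_n\ge a_{n-k}$.
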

\begin{proof}
We first note that the derivation of \eqref{eq:binetexpansion} does not rely on the initial values; any sequence satisfying the recurrence takes on this form.\\

Since one of the initial values $a_0,a_1,\dots,a_{k-1}$ is a positive integer, we know that one of $a_k,a_{k+1},\dots,a_{2k-1}$ is also a positive integer by the recurrence relation, which forces $a_{n}$ to be at least $a_{n-k}$. Define $i\in[k,2k-1]$ to be an index such that $a_i$ is positive. Consider the sequence $b_n=a_{n+i-k+1}$, which has $b_{k-1}=a_i>0$. By the recurrence relation, we have $b_n\ge a_n$ for all $n$, which would be impossible if the Binet expansion of $b_n$ had a non-positive coefficient attached to the $r^n$ term. Eventual monotonicity thus follows.
\end{proof}

\subsection{A Generalization of Binet's Formula}

In general, the Binet expansion of a recurrence sequence is quite unpleasant to compute or work with. However, things become much simpler when the characteristic polynomial has no multiple roots. In that case, we may construct an explicit formula for the $n$th term of the sequence, given a nice set of initial values. Keeping in mind that linear combinations of sequences satisfying a recurrence also satisfy the recurrence, one could construct a formula for the $n$th term given arbitrary initial values.

\begin{theorem}\label{thm: Binet Generalization}
Consider a ZLRR with characteristic polynomial $P(x)$ that does not have multiple roots, and initial values $a_i=0$ for $0\le i\le k-2$, $a_{k-1}=1$. Then each term of the resulting sequence may be expressed as
\begin{equation}
    a_n\ = \ c_1\,r_1^n+c_2\,r_2^n+\cdots+c_k\,r_k^n,
\end{equation}
where the $r_i$ are the distinct roots of $P(x)$, and $c_i=1/P'(r_i)$.
\end{theorem}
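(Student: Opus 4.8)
The plan is to exhibit the claimed closed form as a particular solution of the recurrence and then invoke uniqueness of solutions. Since $P(x)$ has $k$ distinct roots $r_1,\dots,r_k$, Theorem~\ref{thm:binetexpansion} applies with every multiplicity equal to $1$, so each $q_i$ is a constant and every sequence satisfying the recurrence has the form $a_n=\sum_{i=1}^k c_i\,r_i^n$. A solution of an order-$k$ recurrence is determined by its first $k$ values, so it suffices to produce constants $c_i$ for which $\sum_{i=1}^k c_i\,r_i^n$ reproduces the prescribed initial data $a_0=\cdots=a_{k-2}=0$, $a_{k-1}=1$; those constants are then forced to be the unique ones.

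First I would set $c_i:=1/P'(r_i)$, which is well defined because a simple root satisfies $P'(r_i)\neq 0$, and define $b_n:=\sum_{i=1}^k r_i^n/P'(r_i)$. Each geometric sequence $r_i^n$ satisfies the recurrence, since $r_i^{n+1}-c_1 r_i^n-\cdots-c_k r_i^{n+1-k}=r_i^{\,n+1-k}P(r_i)=0$; being a linear combination of solutions, $b_n$ satisfies the recurrence as well. It then remains only to check that $b_n$ matches the initial values.

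The key step, and the only nontrivial point, is the identity
\begin{equation}
    \sum_{i=1}^k \frac{r_i^{\,n}}{P'(r_i)}\ =\ [x^{k-1}]\,x^n\ =\ \begin{cases}0 & 0\le n\le k-2,\\ 1 & n=k-1,\end{cases}
\end{equation}
where $[x^{k-1}]$ denotes the coefficient of $x^{k-1}$. I would prove this via Lagrange interpolation: for any polynomial $g$ of degree at most $k-1$,
\begin{equation}
    g(x)\ =\ \sum_{i=1}^k \frac{g(r_i)}{P'(r_i)}\cdot\frac{P(x)}{x-r_i},
\end{equation}
since both sides are polynomials of degree at most $k-1$ agreeing at the $k$ distinct points $r_1,\dots,r_k$, using $P'(r_i)=\prod_{j\neq i}(r_i-r_j)$. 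Each $P(x)/(x-r_i)$ is monic of degree $k-1$, so comparing the coefficient of $x^{k-1}$ on both sides gives $\sum_{i=1}^k g(r_i)/P'(r_i)=[x^{k-1}]g$. Taking $g(x)=x^n$ for $0\le n\le k-1$ yields the displayed identity, which is precisely the required initial data $b_n=a_n$ for $0\le n\le k-1$.

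Finally, since $b_n$ satisfies the same order-$k$ recurrence as $a_n$ and agrees with it on the first $k$ terms, the two sequences coincide for all $n$, giving $a_n=\sum_{i=1}^k r_i^n/P'(r_i)$, i.e.\ $c_i=1/P'(r_i)$. The only place care is needed is the interpolation identity (an equivalent route is to read $\sum_i r_i^n/P'(r_i)$ as the sum of residues of $x^n/P(x)$ and use the residue at infinity, which vanishes exactly when $n\le k-2$); everything else is bookkeeping with the recurrence and its uniqueness.
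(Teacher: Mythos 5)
Your proof is correct, but it takes a genuinely different route from the paper's. The paper solves for the coefficients head-on: writing the initial conditions as a Vandermonde linear system, applying Cramer's rule, simplifying the ratio of Vandermonde determinants to $1/\prod_{j\ne i}(r_i-r_j)$, and finally identifying that product with $P'(r_i)$ via a limit and L'H\^opital's rule. You instead invert the problem: you take $b_n:=\sum_i r_i^n/P'(r_i)$ as a candidate, observe it satisfies the recurrence because each geometric sequence $r_i^n$ does, and verify the initial data through the Lagrange interpolation identity $\sum_{i=1}^k g(r_i)/P'(r_i)=[x^{k-1}]g$ for $\deg g\le k-1$ (applied to $g(x)=x^n$), after which uniqueness of solutions with prescribed first $k$ values finishes the argument. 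The two are dual in a precise sense---Lagrange interpolation is exactly the inversion of the Vandermonde matrix---but your version avoids all determinant bookkeeping and sign-tracking, and gets $\prod_{j\ne i}(r_i-r_j)=P'(r_i)$ as a standard fact rather than via L'H\^opital; the cost is that it only \emph{verifies} a known answer rather than deriving it. The paper's more computational method has the advantage of being reusable when the answer is not guessed in advance: the same Cramer's rule/Vandermonde machinery reappears in the proof of Theorem \ref{thm:principalCoefficient}, where the right-hand side of the system is no longer a standard basis vector and a closed form must be extracted rather than checked. Two small polish points: you reuse $c_i$ for both the recurrence coefficients and the Binet coefficients (the paper commits the same abuse of notation in the theorem statement, but your write-up should disambiguate, e.g., calling the Binet coefficients $\alpha_i$), and your parenthetical residue-at-infinity argument, while valid, should either be carried out or dropped rather than left as an aside.
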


Before providing a proof of Theorem \ref{thm: Binet Generalization}, we illustrate with a motivating example: Binet's Formula.

\begin{example}
Consider the Fibonacci Numbers with $F_0=0,F_1=1$. Let $P(x)=x^2-x-1$, which has roots $\alpha = (1+\sqrt{5})/2$ and $\beta=(1-\sqrt5)/2$. Then $P'(x) = 2x-1$ and it is easy to verify that $1/P'(\alpha) = 1/\sqrt{5}$ and $1/P'(\beta) = - 1/\sqrt{5}$, leading to the well known Binet formula for the Fibonacci numbers.
\end{example}

We now prove Theorem \ref{thm: Binet Generalization}.

\begin{proof}
Since each root has multiplicity $1$, the existence of such explicit form follows from the Binet expansion (see Theorem \ref{thm:binetexpansion}), so we are left to prove that $c_i = 1/P'(r_i)$. Using the initial values, we see that the $c_i$ are solutions to the linear system
\begin{equation}
\begin{pmatrix}
1 & 1 & 1 & \cdots & 1\\
r_1 & r_2 & r_3 & \cdots & r_k\\
r_1^2 & r_2^2 & r_3^2 & \cdots & r_k^2\\
\vdots & \vdots & \vdots & \ddots & \vdots\\
r_1^{k-1} & r_2^{k-1} & r_3^{k-1} & \cdots & r_k^{k-1}
\end{pmatrix}
\begin{pmatrix}
c_1\\
c_2\\
c_3\\
\vdots\\
c_k
\end{pmatrix}\ = \
\begin{pmatrix}
0\\
0\\
0\\
\vdots\\
1
\end{pmatrix}.
\end{equation}

Denote the matrix above by $A$, let $A_i$ be the matrix formed by replacing column $i$ of $A$ with the column vector of zeroes and a single 1 in the bottom-most index, and let $M_{ki}$ be the $k,i$ minor matrix of $A$ formed by deleting row $k$ and column $i$. A standard application of Cramer's rule yields:
\begin{align}
    c_i&\ = \ (-1)^{k+i}\left(\prod_{\substack{1\,\le\, a\,<\,b\,\le \,k\\ a,\,b\,\ne\, i}}(r_b-r_a)\right)\bigg/\left(\prod_{1\,\le\, a\,<\,b\, \le\, k}(r_b-r_a)\right)\nonumber\\
    &\ = \ (-1)^{k+i}\bigg/\left(\prod_{\substack{1\,\le\, a\,<\,b\,\le\, k\\ a\, =\, i\text{ or }b\,=\,i}}(r_b-r_a)\right)\nonumber\\
    &\ = \ \frac{(-1)^{k+i}}{(r_i-r_1)(r_i-r_2)\cdots(r_i-r_{i-1})(r_{i+1}-r_i)\cdots(r_{k-1}-r_i)(r_k-r_i)}\nonumber\\
    &\ = \ \frac{(-1)^{k+i}}{\left(\prod_{j\,=\,1}^{i-1}(r_i-r_j)\right)(-1)^{k-i}\left(\prod_{j\,=\,i+1}^{k}(r_i-r_j)\right)}\nonumber\\
    &\ = \ 1\bigg/\prod_{\substack{1\,\le \,j\,\le\, k\\ j\,\ne\, i}}(r_i-r_j).
\end{align}
Note that the product is simply the function
\begin{equation}
    f(x)\ = \ \prod_{\substack{1\,\le j\,\le \,k\\ j\,\ne\, i}}(x-r_j)
\end{equation}
evaluated at $x=r_i$. To evaluate this, we may rewrite
\begin{align}
    f(r_i)&\ = \ \lim_{x\, \to\ r_i}f(x)\ = \ \lim_{x\, \to\ r_i}\prod_{\substack{1\,\le\, j\,\le\, k\\ j\,\ne\, i}}(x-r_j)\nonumber\\
    &\ = \ \lim_{x\, \to\ r_i}\frac{(x-r_i)}{(x-r_i)}\prod_{\substack{1\,\le \,j\,\le\, k\\ j\,\ne\, i}}(x-r_j)\ = \ \lim_{x\,\to\ r_i}\frac{\prod_{1\,\le\, j\,\le\, k}(x-r_j)}{x-r_i}\nonumber\\
    &\ = \ \lim_{x\, \to\ r_i}\frac{P(x)}{x-r_i},
\end{align}
which equals $P'(r_i)$ by L'H\^opital's rule. We thus have $c_i = 1/f(r_i) = 1/P'(r_i)$, completing the proof.
\end{proof}


\section{The Zeroing Algorithm and Applications}\label{sec:conversion}

An alternate approach to understanding decompositions arising from ZLRRs is to see if for every ZLRR one could associate a PLRR with similar behavior: a derived PLRR. In this section, we develop the machinery of the Zeroing Algorithm, which is an extremely powerful tool for understanding recurrence sequences analytically. We prove a very general result about derived recurrences that implies every ZLRS has a derived PLRS.

\subsection{The Zeroing Algorithm}

Consider some ZLRS/PLRS with characteristic polynomial
\begin{equation}\label{eq:P(x)}
     P(x)\ := \ x^k-c_1\,x^{k-1}-c_2\,x^{k-1}-\cdots-c_k,
\end{equation}
and choose a sequence of $k$ real numbers $\beta_1,\beta_2,\dots,\beta_{k}$; the $\beta_i$ are considered the input of the algorithm. For nontriviality, the $\beta_i$ are not all zero. We define the \textit{Zeroing Algorithm} to be the following procedure. First, create the polynomial
\begin{equation}\label{eq:zeroalginitial}
    Q_0(x)\ := \ \beta_1\,x^{k-1}+\beta_2\,x^{k-2}+\cdots+\beta_{k-1}\,x+\beta_k.
\end{equation}
Next, for $t\ge1$, define a sequence of polynomials
\begin{equation}\label{eq: polyrecur}
    Q_{t}(x)\ := \ x\,Q_{t-1}(x)-q(1,t-1)\,P(x),
\end{equation}
indexed by $t$, where $q(1,t)$ is the coefficient of $x^{k-1}$ in $Q_t(x)$. We \textit{terminate} the algorithm at step $t$ if $Q_t(x)$ does not have positive coefficients. An example run of the Zeroing Algorithm is provided in Appendix \ref{app:examples}.\\

To understand the algorithm through linear recurrences, we denote by $q(n,t)$ the coefficient of $x^{k-n}$ in $Q_t(x)$, where $n$ ranges from $1$ to $k$.
The recurrence relation on the polynomials \eqref{eq: polyrecur} yields the following system of recurrence relations
\begin{align}
    q(1,t)&\ = \ q(2,t-1)+c_1\,q(1,t-1),\label{eq: coeffrecur}\\
    q(2,t)&\ = \ q(3,t-1)+c_2\,q(1,t-1),\nonumber \\
    &\ \ \ \vdots \ \nonumber\\
    q(k-1,t)&\ = \ q(k,t-1)+c_{k-1}\,q(1,t-1), \nonumber\\
    q(k,t)&\ = \ c_k\,q(1,t-1),\nonumber
\end{align}
with initial values
\begin{align*}
    q(1,0)\ = \ \beta_1, \ \ \ \  q(2,0)\ = \ \beta_2,\ \ \ \  \cdots, \ \ \ \  q(k,0)\ = \ \beta_k.
\end{align*}
Note that if $q(1,t)$ through $q(k,t)$ are all non-positive, then so are $q(1,t+1)$ through $q(k,t+1)$; the same holds for nonnegativity.

\begin{lemma}\label{lem:satisfiesRecurrence}
The sequence $q(1,t)$ satisfies the recurrence specified by the characteristic polynomial $P(x)$. For each $1\le n\le k$, $q(n,t)$ is a positive linear combination of $q(1,t-j)$, $j=1,...,i+1$:
\begin{align}
    q(n,t)&\ = \ c_n\,q(1,t-1)+c_{n+1}\,q(1,t-2)+\cdots+c_k\,q(1,t-(k+1-n))\nonumber\\
    &\ = \ \sum_{i=0}^{k-n}\,c_{n+i}\,q(1,t-(i+1)).
\end{align}
\end{lemma}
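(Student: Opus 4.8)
The plan is to establish the explicit closed form (the second assertion) first, and then read off the first assertion by specializing to $n=1$. The closed form is the substantive content; once it is in hand, the claim that $q(1,t)$ obeys the recurrence encoded by $P(x)$ is immediate.

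I would prove the identity
\[
q(n,t)\ =\ \sum_{i=0}^{k-n} c_{n+i}\, q(1,t-(i+1))
\]
by downward induction on $n$, running from $n=k$ down to $n=1$, valid for every $t$ large enough that all indices $t-(i+1)$ are nonnegative. The base case $n=k$ is precisely the last line of the system \eqref{eq: coeffrecur}, namely $q(k,t)=c_k\,q(1,t-1)$, which is exactly the asserted formula for $n=k$ (the sum collapses to a single term). For the inductive step I would assume the formula for $n+1$ and invoke the $(n)$th line of the system, $q(n,t)=q(n+1,t-1)+c_n\,q(1,t-1)$. Substituting the inductive hypothesis for $q(n+1,t-1)$ produces $c_n\,q(1,t-1)+\sum_{i=0}^{k-n-1} c_{n+1+i}\,q(1,t-(i+2))$; reindexing the sum by $j=i+1$ turns this into $\sum_{j=0}^{k-n} c_{n+j}\,q(1,t-(j+1))$, which is the desired formula for $n$ (the $j=0$ term absorbs the leading $c_n\,q(1,t-1)$).

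Specializing the established identity to $n=1$ gives
\[
q(1,t)\ =\ c_1\,q(1,t-1)+c_2\,q(1,t-2)+\cdots+c_k\,q(1,t-k),
\]
which is exactly the linear recurrence whose characteristic polynomial is $P(x)=x^k-c_1x^{k-1}-c_2x^{k-2}-\cdots-c_k$; this proves the first assertion.

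Since each step is a direct substitution, there is no real conceptual obstacle; the only thing demanding care is the index bookkeeping. I must track the range of $t$ for which the formula is meaningful, since the lookback index grows by one at each stage of the induction (the $n=k$ case needs only $t\ge1$, while by the time the induction reaches $n=1$ we need $t\ge k$ so that $q(1,t-k)$ is defined), and I must align the reindexing $j=i+1$ so that the coefficient subscripts $c_{n+i}$ match correctly. I would also record in passing the observation already noted before the statement — that the coefficients $c_{n+i}$ are nonnegative — so that each $q(n,t)$ is genuinely a nonnegative-coefficient combination of earlier values of $q(1,\cdot)$, justifying the lemma's description of it as a positive linear combination.
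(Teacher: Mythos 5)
Your proof is correct and is essentially the paper's own argument: both rest on repeatedly substituting the coefficient relations $q(n,t)=c_n\,q(1,t-1)+q(n+1,t-1)$ down to the terminal case $q(k,t)=c_k\,q(1,t-1)$, the paper writing this unrolling out with ellipses (once for $q(1,t)$ and once for general $q(n,t)$) while you package it as a single downward induction on $n$ and read off the $n=1$ case at the end. The difference is purely organizational, and your index bookkeeping and nonnegativity remark are right.
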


\begin{proof}
 We first examine the sequence $q(1,t)$. For $t\ge k$, we have
 \begin{align}
     q(1,t)&\ = \ c_1\,q(1,t-1)+q(2,t-1)\nonumber\\
     &\ = \ c_1\,q(1,t-1)+c_2\,q(1,t-2)+q(3,t-2)\nonumber\\
     &\ \ \ \vdots\ \nonumber \\
     &\ = \ c_1\,q(1,t-1)+c_2\,q(1,t-2)+\cdots+c_{k-1}\,q(1,t-(k-1))+q(k,t-(k-1))\nonumber\\
     &\ = \ c_1\,q(1,t-1)+c_2\,q(1,t-2)+\cdots+c_{k-1}\,q(1,t-(k-1))+c_k\,q(1,t-k),
 \end{align}
 which is what we want.\\

The proof for $q(n,t)$ is similar:
 \begin{align}
     q(n,t)&\ = \ c_n\,q(1,t-1)+q(n+1,t-1)\nonumber\\
     &\ = \ c_n\,q(1,t-1)+c_{n+1}\,q(1,t-2)+q(n+2,t-2)\nonumber\\
     &\ = \ c_n\,q(1,t-1)+c_{n+1}\,q(1,t-2)+c_{n+2}\,q(1,t-3)+q(n+3,t-3)\nonumber\\
     &\ \ \ \vdots\ \nonumber\\
     &\ = \ c_n\,q(1,t-1)+c_{n+1}\,q(1,t-2)+\cdots+q(n+(k-n),t-(k-n))\nonumber\\
     &\ = \ c_n\,q(1,t-1)+c_{n+1}\,q(1,t-2)+\cdots+q(k,t-(k-n))\nonumber\\
     &\ = \ c_n\,q(1,t-1)+c_{n+1}\,q(1,t-2)+\cdots+c_k\,q(1,t-(k-n+1)),
 \end{align}
 as desired.
\end{proof}

Now we may prove a very useful result.

\begin{lemma}
Let $r$ be the principal root of $P(x)$. Consider the Binet expansion of the sequence $q(n,t)$ (indexed by $t$) for each $n$. The sign of the coefficient attached to the term $r^t$ equals the sign of $Q_0(r)$.
\end{lemma}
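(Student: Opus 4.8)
The plan is to track the coefficient of $r^t$ in the Binet expansion of each sequence $q(n,t)$ and to tie all of these coefficients to the single number $Q_0(r)$ through one clean evaluation trick. By Lemma \ref{lem:satisfiesRecurrence}, each $q(n,t)$ (indexed by $t$) is a fixed linear combination of shifts of $q(1,\cdot)$, and $q(1,\cdot)$ satisfies the recurrence with characteristic polynomial $P(x)$; since shifts and linear combinations of solutions of a linear recurrence are again solutions, every $q(n,\cdot)$ satisfies that recurrence and hence admits a Binet expansion (Theorem \ref{thm:binetexpansion}). Write $\kappa_n$ for the coefficient attached to $r^t$ in the expansion of $q(n,t)$. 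Because $r$ is the principal root, Lemma \ref{lem:greatestroot} gives $|r_i|<r$ for every other root $r_i$, so all non-principal contributions are $o(r^t)$ and $q(n,t)/r^t \to \kappa_n$ as $t\to\infty$.

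The key step is to evaluate the defining recurrence \eqref{eq: polyrecur} at $x=r$. Since $P(r)=0$, the term $q(1,t-1)\,P(r)$ vanishes and we obtain $Q_t(r)=r\,Q_{t-1}(r)$, hence $Q_t(r)=r^t\,Q_0(r)$ for all $t$. On the other hand, by the very definition of $q(n,t)$ as the coefficient of $x^{k-n}$, we have $Q_t(r)=\sum_{n=1}^{k} q(n,t)\,r^{k-n}$. Dividing by $r^t$ and letting $t\to\infty$, the left side is the constant $Q_0(r)$ while the right side converges to $\sum_{n=1}^{k}\kappa_n\,r^{k-n}$; therefore $Q_0(r)=\sum_{n=1}^{k}\kappa_n\,r^{k-n}$.

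It remains to show that all the $\kappa_n$ share one sign, which I would extract from Lemma \ref{lem:satisfiesRecurrence}: comparing coefficients of $r^t$ in $q(n,t)=\sum_{i=0}^{k-n}c_{n+i}\,q(1,t-(i+1))$ yields $\kappa_n=\kappa_1\,\lambda_n$ with $\lambda_n:=\sum_{i=0}^{k-n}c_{n+i}\,r^{-(i+1)}$. Each $\lambda_n$ is strictly positive, since $r>0$, the $c_j$ are non-negative, and the term $i=k-n$ contributes $c_k\,r^{-(k-n+1)}>0$ (recall $c_k>0$); in particular, dividing $P(r)=0$ by $r^k$ forces $\lambda_1=1$, consistent with $\kappa_1=\kappa_1\lambda_1$. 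Substituting into the previous identity gives $Q_0(r)=\kappa_1\sum_{n=1}^{k}\lambda_n\,r^{k-n}$, where the factor multiplying $\kappa_1$ is a positive number. Hence $\sgn(Q_0(r))=\sgn(\kappa_1)=\sgn(\kappa_n)$ for every $n$ (the degenerate case $\kappa_1=0$ simply makes every quantity zero), which is exactly the claim.

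The main obstacle is conceptual rather than computational: recognizing the evaluation-at-$r$ identity $Q_t(r)=r^t\,Q_0(r)$ as the bridge between the input polynomial $Q_0$ and the Binet coefficients $\kappa_n$. The only analytic point requiring care is the passage to the limit, which is precisely where the principality of $r$ (Lemma \ref{lem:greatestroot}, part (3)) is used to annihilate the sub-dominant roots; everything else is bookkeeping with the recurrence relations of Lemma \ref{lem:satisfiesRecurrence}.
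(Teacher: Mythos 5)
Your proposal is correct and follows essentially the same route as the paper's proof: the evaluation identity $Q_t(r)=r\,Q_{t-1}(r)$, hence $Q_t(r)=r^t\,Q_0(r)$, the expansion $Q_t(r)=\sum_{n=1}^{k}q(n,t)\,r^{k-n}$, the limit $\lim_{t\to\infty}Q_t(r)/r^t=Q_0(r)$, and the sign-matching of all the $q(n,t)$ via the positive linear combinations from Lemma \ref{lem:satisfiesRecurrence}. The only difference is one of rigor, in your favor: your explicit computation $\kappa_n=\kappa_1\lambda_n$ with $\lambda_n=\sum_{i=0}^{k-n}c_{n+i}\,r^{-(i+1)}>0$ (and the check $\lambda_1=1$) makes precise the step that the paper merely asserts, namely that the positive-combination structure forces all the $r^t$-coefficients to share the sign of $Q_0(r)$.
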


\begin{proof}
 Recall the recurrence relation $Q_{t}(x)=x\,Q_{t-1}(x)-q(1,t-1)\,P(x)$. Evaluating at $x=r$, the $P(x)$ term drops out and we have $Q_{t}(r)=r\,Q_{t-1}(r)$. Iterating this procedure gives $r^t\,Q_0(r)$.

Recalling that $q(n,t)$ is defined to be the coefficient of $x^{k-n}$ in $Q_t(x)$, we have
 \begin{equation}
     r^t\,Q_0(r)\ = \ Q_t(r)\ = \ r^{k-1}\,q(1,t)+r^{k-2}\,q(2,t)+\cdots+r\,q(k-1,t)+q(k,t).
 \end{equation}
 Note that this implies that the sequence $Q_t(r)$ satisfies the recurrence specified by $P(x)$ as well. Since each $q(n,t)$ is a positive linear combination of $q(1,t-j)$, $j=1,...,i+1$, they all have the same sign on the coefficient of the $r^t$ term in their explicit expansion as a sum of geometric sequences, and this sign equals the sign of the coefficient of $r^t$ in the expansion of $Q_t(r)$. It remains to show the sign in $Q_t(r)$ equals the sign of $Q_0(r)$.\\

 Consider the quantity $\lim_{t\to\infty} Q_t(r) / r^t$, which extracts the coefficient of the $r^t$ term in $Q_t(r)$. Since $Q_t(r)=r^tQ_0(r)$, we have
 \begin{equation}
     \lim_{t\to\infty}\,\frac{Q_t(r)}{r^t}\ = \ \lim_{t\to\infty}\,\frac{r^t\,Q_0(r)}{r^t}\ = \ Q_0(r)
 \end{equation}
as desired.
\end{proof}

We can now establish an exact condition on when the Zeroing Algorithm terminates.

\begin{theorem}\label{thm:algorithmtermination}
Let $Q_0(x)$ be as defined in \eqref{eq:zeroalginitial} and let $r$ be the principal root of $P(x)$ . The Zeroing Algorithm terminates if and only if $Q_0(r)<0$.
\end{theorem}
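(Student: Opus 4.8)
The plan is to establish the biconditional by a trichotomy on the sign of $Q_0(r)$: I will show that the Zeroing Algorithm fails to terminate when $Q_0(r)>0$, terminates when $Q_0(r)<0$, and again fails to terminate (in fact cannot have reached the hypothesis of the theorem) when $Q_0(r)=0$. Together these three cases give termination precisely when $Q_0(r)<0$. The inputs I will rely on are the preceding lemma, which supplies both the identity $Q_t(r)=r^t\,Q_0(r)$ and the fact that the coefficient of $r^t$ in the Binet expansion of $q(1,t)$ has the same sign as $Q_0(r)$; Lemma \ref{lem:satisfiesRecurrence}, which writes each $q(n,t)$ as a non-negative-coefficient combination of recent values $q(1,t-1),\dots,q(1,t-(k-n+1))$; and the invariance note that once $q(1,t),\dots,q(k,t)$ are all non-positive they remain so.

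For the case $Q_0(r)<0$, the preceding lemma gives $q(1,t)=r^t(C+o(1))$ with $C<0$ and $r>0$, so $q(1,t)$ is eventually strictly negative, say for all $t\ge T$. By Lemma \ref{lem:satisfiesRecurrence} each $q(n,t)$ is a combination of the values $\{q(1,s):t-k\le s\le t-1\}$ with non-negative coefficients $c_{n+i}$, so for $t\ge T+k$ every $q(n,t)\le 0$. At such a step $Q_t$ has no positive coefficients and the algorithm terminates. For the case $Q_0(r)>0$, the same lemma makes $q(1,t)$ eventually strictly positive; were the algorithm to terminate at some step, all $q(n,t)$ would be non-positive there, and by the invariance note $q(1,t)$ would stay non-positive thereafter, contradicting its eventual positivity. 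Hence it never terminates.

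The delicate case, and what I expect to be the main obstacle, is $Q_0(r)=0$, since this is exactly what must be excluded to upgrade the easy forward implication $Q_0(r)\le 0$ into the strict $Q_0(r)<0$. Here $Q_t(r)=r^t\,Q_0(r)=0$ for every $t$, so if termination occurred at step $t$ we would have $\sum_{n=1}^{k} q(n,t)\,r^{k-n}=Q_t(r)=0$ with every $q(n,t)\le 0$ and $r>0$, forcing $Q_t\equiv 0$. The plan is then to run the defining recurrence $Q_t(x)=x\,Q_{t-1}(x)-q(1,t-1)\,P(x)$ backward: from $Q_t\equiv 0$ we get $x\,Q_{t-1}(x)=q(1,t-1)\,P(x)$, and comparing constant terms (the left side has none, while the constant term of $-P$ is $c_k>0$) forces $q(1,t-1)=0$, whence $Q_{t-1}\equiv 0$ as well. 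By downward induction $Q_0\equiv 0$, contradicting the standing nontriviality assumption that the $\beta_i$ are not all zero. This rules out termination when $Q_0(r)=0$ and closes the trichotomy.

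The one subtlety to watch in writing this up is the passage $q(1,t)=r^t(C+o(1))$: it uses that $r$ is the \emph{principal} root, so every other root contributes a term of magnitude $o(r^t)$ even after polynomial factors, guaranteeing that the sign of $q(1,t)$ is eventually the sign of $C$ rather than merely controlling a limit of ratios. Everything else is bookkeeping with the recurrence in Lemma \ref{lem:satisfiesRecurrence} and the positivity $c_k>0$ that holds for every PLRR and ZLRR.
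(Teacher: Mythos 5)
Your proof is correct, and its skeleton is the same as the paper's: both rest on the identity $Q_t(r)=r^t\,Q_0(r)$ (equivalently, the sign lemma for the Binet coefficient of $r^t$), on Lemma \ref{lem:satisfiesRecurrence} to propagate the sign of $q(1,t)$ to the other coefficient sequences, and on the observation that once all $q(n,t)$ are non-positive they stay non-positive. The one place you genuinely diverge is the degenerate case $Q_0(r)=0$. The paper argues forward: termination at $t_0$ forces $q(1,t_0)=\cdots=q(k,t_0)=0$, hence $q(n,t)=0$ for all $t>t_0$, and then asserts that this contradicts nontriviality of the $\beta_i$ --- a step left implicit, since the $\beta_i$ live at $t=0$ and one still needs to propagate the vanishing backward (via reversibility of the recurrence, which uses $c_k>0$, or via the Binet expansion). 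You instead run the polynomial recurrence backward: from $Q_t\equiv 0$ you get $x\,Q_{t-1}(x)=q(1,t-1)\,P(x)$, and comparing constant terms (again using $c_k>0$) forces $q(1,t-1)=0$ and hence $Q_{t-1}\equiv 0$, so downward induction lands exactly at $Q_0\equiv 0$, contradicting nontriviality. This makes explicit the backward propagation the paper glosses over, so your treatment of this sub-case is the more airtight of the two. A minor further difference: for $Q_0(r)>0$ you use eventual strict positivity of $q(1,t)$ plus persistence of non-positivity, where the paper simply evaluates $Q_{t_0}(r)\le 0$ at a hypothetical termination step and contradicts $r^{t_0}Q_0(r)>0$; both are valid, the paper's being a line shorter.
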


\begin{proof}[Proof of Theorem \ref{thm:algorithmtermination}]
If $Q_0(r)<0$, then the coefficient of $r^t$ in the expansion of $q(n,t)$ is also negative for each $n$; this implies $q(n,t)$ diverges to negative infinity, and that there must be some $t$ when $q(n,t)$ is non-positive for each $n$.\\

For the other direction, if $Q_0(r)\ge0$ then suppose to the contrary that there is some $t_0$ where $q(n,t_0)\le0$ for all $n$. Then we would have
\begin{equation}
    r^{t_0}\,Q_0(r)\ = \ Q_{t_0}(r)\ = \ r^{k-1}\,q(1,t_0)+r^{k-2}\,q(2,t_0)+\cdots+r\,q(k-1,t_0)+q(k,t_0)\ \le \ 0,
\end{equation}
which implies $Q_0(r)\le0$, forcing $Q_0(r)=0$.\\

Notice that this equality only occurs when $q(1,t_0)=q(2,t_0)=\cdots=q(k,t_0)=0$. This implies for each $n$, $q(n,t)=0$ for all $t>t_0$, so each $q(n,t)$ is identically zero, which contradicts our assumption of non-triviality.
\end{proof}

\subsection{A General Conversion Result}

Now that we have developed the main machinery of the Zeroing Algorithm, we can prove a very general result on converting between linear recurrences.

\begin{proof}[Proof of Theorem \ref{thm:algorithmdivisibility}]
For ease of notation, extend the $\gamma$ sequence by setting $\gamma_i=0$ for $i>m$. We modify the Zeroing Algorithm slightly to produce the desired $p(x)$.

Consider a sequence of polynomials $Q_t(x)$ of degree at most $k-1$, with
\begin{align}
    Q_1(x)&\ = \ \gamma_1\,(P(x)-x^k),\nonumber\\
    Q_t(x)&\ = \ x\,Q_{t-1}(x)-(q(1,t-1)-\gamma_t)\,P(x)-\gamma_t\,x^k,
\end{align}
where again, $q(n,t)$ denotes the coefficient of $x^{k-n}$ in $Q_t(x)$. Note that after iteration $m$, $\gamma_t=0$ and we have the unmodified Zeroing Algorithm again.

\begin{lemma} \label{lem:modifiedAlgorithm}
Define $p_t(x):=x^{k}\,\Gamma_t(x)+Q_t(x)$. At each iteration $t$, we have the following:
\begin{enumerate}[i)]
\item $P(x)$ divides $p_t(x)$,
\item the first $t$ coefficients of $p_t(x)$ are $\gamma_1$ through $\gamma_t$, and
\item $Q_t(r)\,=\,-r^k\,\Gamma_t(r)$.
\end{enumerate}
\end{lemma}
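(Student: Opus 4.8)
The plan is to prove all three statements simultaneously by induction on $t$, since the recurrence for $Q_t$ couples the three claims together in a natural way. The base case $t=1$ is a direct computation: with $Q_1(x)=\gamma_1(P(x)-x^k)$ and $\Gamma_1(x)=\gamma_1$, we have $p_1(x)=\gamma_1 x^k + \gamma_1(P(x)-x^k)=\gamma_1 P(x)$, which immediately gives (i), reads off its leading coefficient $\gamma_1$ for (ii), and yields (iii) since $Q_1(r)=\gamma_1(P(r)-r^k)=-\gamma_1 r^k=-r^k\Gamma_1(r)$ using $P(r)=0$.

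For the inductive step, I would compute $p_t(x)=x^k\Gamma_t(x)+Q_t(x)$ directly from the defining recurrences. The key algebraic fact to establish is a clean recurrence relating $p_t$ to $p_{t-1}$. Substituting the definition of $Q_t(x)=xQ_{t-1}(x)-(q(1,t-1)-\gamma_t)P(x)-\gamma_t x^k$ and using $\Gamma_t(x)=x\Gamma_{t-1}(x)+\gamma_t$, the two $\gamma_t x^k$ contributions should cancel, and one expects to arrive at something of the form
\begin{equation}
    p_t(x)\ = \ x\,p_{t-1}(x)-\bigl(q(1,t-1)-\gamma_t\bigr)\,P(x).
\end{equation}
Granting this identity, part (i) follows immediately: $P(x)\mid p_{t-1}(x)$ by the inductive hypothesis and $P(x)$ trivially divides the second term, so $P(x)\mid p_t(x)$. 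For part (ii), I note that multiplying by $x$ shifts the known leading coefficients $\gamma_1,\dots,\gamma_{t-1}$ of $p_{t-1}$ up by one degree, and subtracting $(q(1,t-1)-\gamma_t)P(x)$ (whose leading term is $x^k$ times a constant) adjusts the next coefficient so that the $t$th coefficient becomes $\gamma_t$; this is where the specific correction term $-(q(1,t-1)-\gamma_t)$ in the recurrence is engineered precisely to insert $\gamma_t$ in the right slot. Part (iii) follows by evaluating the boxed recurrence at $x=r$: since $P(r)=0$ the correction term vanishes, giving $p_t(r)=r\,p_{t-1}(r)$, and combined with $p_{t-1}(r)=r^{k}\Gamma_{t-1}(r)+Q_{t-1}(r)=0$ from the inductive hypothesis (iii), we get $p_t(r)=0$, i.e. $Q_t(r)=-r^k\Gamma_t(r)$.

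The main obstacle I anticipate is verifying the degree and coefficient bookkeeping in part (ii) carefully: I must confirm that $Q_t(x)$ genuinely has degree at most $k-1$ (so that the $\gamma_i$ sit cleanly as the top coefficients of $p_t$ coming from the $x^k\Gamma_t(x)$ piece), and that the correction term does not disturb the already-fixed coefficients $\gamma_1,\dots,\gamma_{t-1}$. This requires tracking exactly how multiplying $Q_{t-1}$ by $x$ pushes its $x^{k-1}$ coefficient $q(1,t-1)$ into the $x^k$ position and how subtracting $(q(1,t-1)-\gamma_t)P(x)+\gamma_t x^k$ cancels that overflow while leaving behind the intended $\gamma_t$; the rest is routine. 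The cleanest route is to prove the boxed recurrence for $p_t$ first as a standalone identity, and then read off (i), (ii), (iii) as nearly immediate consequences.
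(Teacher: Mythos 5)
Your proposal is correct and takes essentially the same route as the paper's own proof: the paper also argues by induction, deriving exactly your key recurrence $p_{t+1}(x) = x\,p_t(x) - \left(q(1,t)-\gamma_{t+1}\right)P(x)$ for part (i), checking that the $x^k$ coefficient of $Q_{t+1}(x)$ vanishes (so $Q_t$ has degree at most $k-1$) for part (ii), and propagating the identity $Q_t(r) = -r^k\,\Gamma_t(r)$ for part (iii). Your only cosmetic difference is phrasing (iii) as $p_t(r)=0$ inherited through the recurrence rather than inducting directly on $Q_t(r)$, which is trivially equivalent.
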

\begin{proof}
A straightforward induction argument suffices for all of them.
\ \\
i) We have
\begin{equation}
p_1(x)\ = \ x^k\,\gamma_1(x)+Q_1(x)\ = \ x^k\,\gamma_1+\gamma_1\,(P(x)-x^k)\ = \ \gamma_1\,P(x).
\end{equation}
Assuming $P(x)$ divides $p_t(x)$, we have
\begin{align}
    p_{t+1}(x)&\ = \ x^k\,\Gamma_{t+1}(x)+Q_{t+1}(x)\nonumber\\
    &\ = \ x^{k}\,(\gamma_1\,x^{t}+\gamma_2\,x^{t-1}+\cdots+\gamma_{t+1})+Q_{t+1}(x)\nonumber\\
    &\ = \ x\cdot x^{k}\,(\gamma_1\,x^{t-1}+\gamma_2\,x^{t-2}+\cdots+\gamma_t)+\gamma_{t+1}\,x^k+x\,Q_{t}(x) \nonumber\\
    & \ \ \ \ \ - \ (q(1,t)-\gamma_{t+1})\,P(x)-\gamma_{t+1}\,x^k\nonumber\\
    &\ = \ x\,(x^{k}\,(\,\gamma_1\,x^{t-1}+\gamma_2\,x^{t-2}+\cdots+\gamma_t)+Q_t(x))-(q(1,t)-\gamma_{t+1})\,P(x)\nonumber\\
    &\ = \ x\,p_t(x)-(q(1,t)-\gamma_{t+1})\,P(x),
\end{align}
which is divisible by $P(x)$ by the inductive hypothesis.

\ \\
ii) We first prove that $Q_t(x)$ has degree at most $k-1$. This is certainly true for $Q_1(x)=\gamma_1(P(x)-x^k)$. Assume $Q_t(x)$ as degree at most $k-1$; we then have
\begin{equation}
    Q_{t+1}(x)\ = \ x\,Q_{t}(x)-(q(1,t)-\gamma_{t+1})\,P(x)-\gamma_{t+1}\,x^k.
\end{equation}
It is evident that the highest power of $x$ to appear is $x^k$, which has coefficient
\begin{equation}
q(1,t)-(q(1,t)-\gamma_{t+1})-\gamma_{t+1}\ = \ 0.
\end{equation}
From the construction $p_t(x):=x^{k}\,\Gamma_t(x)+Q_t(x)$. It is evident that the first $t$ coefficients are just those of $\Gamma_t(x)$.

\ \\
iii) We have
\begin{equation}
    Q_1(r)\ = \ \gamma_1\,(P(r)-r^k)\ = \ -r^k\,\gamma_1.
\end{equation}
Suppose $Q_t(r)=-r^k\,\Gamma_t(r)$; we have
\begin{align}
    Q_{t+1}(r)&\ = \ r\,Q_{t}(r)-(q(1,t)-\gamma_{t+1})\,P(r)-\gamma_{t+1}\,r^k\nonumber\\
    &\ = \ r\,(-r^k\,\Gamma_t(r))-\gamma_{t+1}\,r^k\nonumber\\
    &\ = \ -r^k\,(r\,\Gamma_t(r)+\gamma_{t+1})\nonumber\\
    &\ = \ -r^k\,\Gamma_{t+1}(r).
\end{align}
\end{proof}

Now we have $Q_m(r)=-r^m\,\Gamma_m(r)<0$, since $\Gamma_m(r)>0$. Running the Zeroing Algorithm starting with $Q_m(x)$ yields some $Q_{m+t_0}(x)$ that does not have positive coefficients. We see that since $p_{m+t_0}(x)=x^k\,\Gamma_{m+t_0}(x)+Q_{m+t_0}(x)$ is divisible by $P(x)$, has its initial $m+t_0$ coefficients as $\gamma_1$ through $\gamma_m$ followed by $t_0$ 0's, and thus does not have positive coefficients after $\gamma_m$; we may choose $p(x)=p_{m+t_0}(x)$.
\end{proof}
\begin{cor}
Given $\gamma_1=1$ and arbitrary integers $\gamma_2$ through $\gamma_m$ with $\Gamma_m(r)>0$, there is a recurrence derived from $P(x)$ whose characteristic polynomial has its first coefficients as $\gamma_1$ through $\gamma_m$ with no positive coefficients thereafter.
\end{cor}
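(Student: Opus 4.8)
The plan is to read this off almost directly from Theorem~\ref{thm:algorithmdivisibility}: its conclusion already furnishes a polynomial with exactly the prescribed coefficient pattern, and the only additional work is to check that the sharpened hypotheses $\gamma_1=1$ and $\gamma_2,\dots,\gamma_m\in\mathbb{Z}$ promote that polynomial to a genuine characteristic polynomial of an integer recurrence that is \emph{derived from} $P(x)$ in the sense of Definition~\ref{def:derivedfrom}.

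First I would apply Theorem~\ref{thm:algorithmdivisibility} to the sequence $\gamma_1,\dots,\gamma_m$, which is legitimate since $\Gamma_m(r)>0$ by hypothesis. This produces a polynomial $p(x)$ that is divisible by $P(x)$, whose first $m$ coefficients are $\gamma_1,\dots,\gamma_m$, and which has no positive coefficients thereafter. The coefficient pattern demanded by the corollary is therefore immediate; everything left to do is to confirm that $p(x)$ is the characteristic polynomial of a recurrence derived from $P(x)$.

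Next I would revisit the construction in the proof of Theorem~\ref{thm:algorithmdivisibility}, where $p(x)=x^k\,\Gamma_t(x)+Q_t(x)$ and the $Q_t(x)$ evolve by \eqref{eq: polyrecur}. Two observations finish the argument. First, integrality: since $P(x)$ is monic with integer coefficients and each $\gamma_i\in\mathbb{Z}$, a one-line induction along the recurrence for $Q_t$ (which only multiplies by $x$ and subtracts integer multiples of $P(x)$ and $x^k$) shows every $Q_t(x)\in\mathbb{Z}[x]$, hence $p(x)\in\mathbb{Z}[x]$. Second, monicity: by Lemma~\ref{lem:modifiedAlgorithm} each $Q_t(x)$ has degree at most $k-1$, while the term $x^k\,\Gamma_t(x)$ contributes leading coefficient $\gamma_1=1$ in a strictly higher degree; thus $p(x)$ is monic. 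Writing $K=\deg p(x)$ and naming the lower coefficients $-c_1,\dots,-c_K$, we obtain $p(x)=x^K-c_1x^{K-1}-\cdots-c_K$ with all $c_i\in\mathbb{Z}$, which is exactly the characteristic polynomial of the integer recurrence $b_{n+1}=c_1\,b_n+\cdots+c_K\,b_{n+1-K}$.

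Finally I would establish the derived-from relation: because $P(x)$ is monic with integer coefficients and divides $p(x)$, polynomial division by a monic integer polynomial forces the quotient $Q(x)=p(x)/P(x)$ to lie in $\mathbb{Z}[x]$ and to be nonzero, so $p(x)=P(x)\,Q(x)$ is a factorization of the required type and the recurrence above is derived from $P(x)$ by Definition~\ref{def:derivedfrom}. I do not expect a genuine obstacle here, since the statement is essentially a specialization of Theorem~\ref{thm:algorithmdivisibility}. The only points demanding care are the integrality of the $Q_t$ (which is precisely why the $\gamma_i$ are now required to be integers) and the recognition that the hypothesis $\gamma_1=1$ is exactly what makes $p(x)$ monic and hence a legitimate characteristic polynomial.
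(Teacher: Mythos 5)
Your proposal is correct and follows essentially the same route as the paper: specialize Theorem \ref{thm:algorithmdivisibility} to get $p(x)$, use $\gamma_1=1$ for monicity and the integrality of $\gamma_2,\dots,\gamma_m$ for integer coefficients. The paper's proof is just a terser version of yours; your added details (the induction showing each $Q_t(x)\in\mathbb{Z}[x]$ and the check that the quotient $p(x)/P(x)$ lies in $\mathbb{Z}[x]$) merely make explicit what the paper leaves implicit.
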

\begin{proof}
Consider $p(x)$ from Theorem \ref{thm:algorithmdivisibility}, whose first coefficients are $\gamma_1$ through $\gamma_m$. Since $\gamma_1=1$, $p(x)$ is the characteristic polynomial of a linear recurrence. In fact, since $\gamma_2$ through $\gamma_m$ are integers, $p(x)$, and thus the recurrence, has integer coefficients.
\end{proof}
\begin{cor}\label{cor: conversion}
Every ZLRR has a derived PLRR.
\end{cor}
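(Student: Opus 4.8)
The plan is to reduce the statement to the preceding corollary, which produces, from any prescribed integer leading coefficients $\gamma_1,\dots,\gamma_m$ with $\Gamma_m(r)>0$, a monic integer polynomial divisible by $P(x)$ whose first $m$ coefficients are exactly $\gamma_1,\dots,\gamma_m$ and whose remaining coefficients are non-positive. Recall that the characteristic polynomial of a PLRR is a monic integer polynomial $x^L-c_1x^{L-1}-\cdots-c_L$ with each $c_i$ a non-negative integer and $c_1,c_L>0$; equivalently, it is monic, all of its non-leading coefficients are non-positive, its second coefficient is strictly negative, and its constant term is strictly negative. Matching the corollary's output against this list of requirements is the entire task.

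First I would verify that the principal root $r$ of a ZLRR satisfies $r>1$. Writing $P(x)=x^L-\sum_{m\in S}c_m x^{L-m}$ with $S=\{m\mid c_m\neq 0\}$, we have $P(1)=1-\sum_{m\in S}c_m$. Since $c_1=0$ we have $1\notin S$, and $\gcd(S)=1$ then forces $|S|\ge 2$ (a singleton $S=\{m\}$ would give $\gcd(S)=m\ge 2$). Hence $\sum_{m\in S}c_m\ge 2$, so $P(1)\le -1<0$; as $P(x)\to+\infty$ and $r$ is the unique positive root (Lemma \ref{lem:greatestroot}), we conclude $r>1$.

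Next I would apply the preceding corollary with the minimal choice $m=2$, $\gamma_1=1$, $\gamma_2=-1$. Then $\Gamma_2(r)=r-1>0$ by the previous step, so the corollary yields a monic integer polynomial $p(x)$ divisible by $P(x)$ whose two leading coefficients are exactly $1,-1$ and all of whose remaining coefficients are $\le 0$. Thus $p(x)=x^L-x^{L-1}-(\text{a non-negative integer combination of lower powers})$, the characteristic polynomial of a recurrence with $c_1=1>0$ and all $c_i\ge 0$; the only PLRR requirement still in doubt is $c_L>0$, i.e.\ that the constant term of $p(x)$ is strictly negative rather than merely $\le 0$.

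The main obstacle is exactly this last condition, which the corollary does not guarantee. I would resolve it using the fact that a ZLRR has $c_L>0$, whence $P(0)=-c_L\neq 0$ and so $x\nmid P(x)$. If $p(0)=0$, then $x\mid p(x)$; writing $p(x)=P(x)Q(x)$ and using $P(0)\neq 0$ gives $x\mid Q(x)$, so $p(x)/x=P(x)\,(Q(x)/x)$ is again an integer polynomial divisible by $P(x)$. Dividing by $x$ merely deletes a trailing zero, preserving monicity, the second coefficient $-1$, and non-positivity of the lower coefficients, while lowering the degree; moreover the degree cannot drop below $\deg P\ge 2$, since the quotient stays divisible by $P(x)$. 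Iterating until the constant term is nonzero therefore terminates and yields $p^\ast(x)$ that is monic, divisible by $P(x)$, has second coefficient $-1$ (so $c_1=1>0$) and all lower coefficients $\le 0$, and has a nonzero constant term which, being $\le 0$, is strictly negative (so $c_L>0$). Then $p^\ast(x)$ is the characteristic polynomial of a PLRR derived from $P(x)$, as desired.
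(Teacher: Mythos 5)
Your proof takes essentially the same route as the paper: choose $m=2$, $\gamma_1=1$, $\gamma_2=-1$, observe $\Gamma_2(r)=r-1>0$, and invoke Theorem \ref{thm:algorithmdivisibility} (via its corollary) to obtain a monic integer polynomial $p(x)$ divisible by $P(x)$ with leading coefficients $1,-1$ and no positive coefficients thereafter. The paper's proof stops there and simply asserts that $p(x)$ is the characteristic polynomial of a PLRR; you add two details it glosses over, namely an explicit argument that $r>1$ (using $1\notin S$ and $\gcd(S)=1$ to force $P(1)\le -1$) and, more substantively, a repair for the case where $p(x)$ has constant term zero, in which $c_L>0$ in Definition \ref{def:plrrdefinition} would fail; your trick of stripping factors of $x$, which preserves divisibility by $P(x)$ because $P(0)=-c_L\neq 0$, closes that gap correctly, so your argument is both correct and slightly more complete than the one in the paper.
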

\begin{proof}
Take $m=2$, $\gamma_1=1,\gamma_2=-1$. We thus have $\Gamma_m(r)=r-1>0$, as shown in the section on characteristic polynomials. We can thus find a $p(x)$ whose first two coefficients are $1$, $-1$ with no positive coefficients thereafter; this is the characteristic polynomial of a PLRR.
\end{proof}

Note that a ZLRR does not have a unique derived PLRR; the Zeroing Algorithm simply produces a PLRR whose characteristic polynomial takes the coefficients $1$, $-1$, a bunch of $0$'s, and up to $k$ nonzero terms at the end, where $k$ is the degree of the characteristic polynomial of the ZLRR. In fact, for any positive integer $n$ less than the principal root of a ZLRR, there exists a derived PLRR with leading coefficients $1,-n$; this is seen by taking $\gamma_2=-n$ in Corollary \ref{cor: conversion}. In Appendix \ref{app:list}, we provide an example conversion of a ZLRR to a PLRR, as well as a list of ZLRRs and their derived PLRR that comes from the Zeroing Algorithm.

\subsection{Fast Determination of Divergence Using the Zeroing Algorithm}

Finally, we have all of the tools necessary to prove our final result, which predicts the direction of divergence of a PLRS/ZLRS using its initial values. An example prediction is given in Appendix \ref{app:examples}.

\begin{proof}[Proof of Theorem \ref{thm:algorithm determination}]
We set $Q_0(x)=Q(x)$ and run the Zeroing Algorithm; we have proved that the sequence $q(1,t)$ follows the linear recurrence and has behavior determined by $Q_0(r)$. Thus, it suffices to show that $q(1,t)$ has the same initial values as $a_t$; explicitly, $q(1,t-1)=a_{t}$ for $1\le t\le k$.

We first notice, from the recurrences on $q(n,t)$ (equation \ref{eq: coeffrecur}), that
\begin{align}
    q(1,t)&\ = \ c_1\,q(1,t-1)+q(2,t-1)\nonumber\\
    &\ = \ c_1\,q(1,t-1)+c_2\,q(1,t-2)+q(3,t-2)\nonumber\\
    &\ \  \ \vdots\nonumber\\
    &\ = \ c_1\,q(1,t-1)+c_2\,q(1,t-2)+\cdots+c_t\,q(1,0)+q(t+1,0)\nonumber\\
    &\ = \ c_1\,q(1,t-1)+c_2\,q(1,t-2)+\cdots+c_t\,q(1,0)+(\alpha_{t+1}-d_{t+1}).
\end{align}

Now we proceed by strong induction. By construction, $q(1,0)=a_1$. For some $t$, assume $q(1,\tau-1)=a_{\tau}$ for all $1\le \tau <t$. We thus have
\begin{align}
    q(1,t)&\ = \ c_1\,q(1,t-1)+c_2\,q(1,t-2)+\cdots+c_t\,q(1,0)+(a_{t+1}-d_{t+1})\nonumber\\
    &\ = \ (c_1\,a_t+c_2\,a_{t-1}+\cdots+c_t\,a_1)+a_{t+1}-d_{t+1}\nonumber\\
    &\ = \ d_{t+1}+a_{t+1}-d_{t+1}\nonumber\\
    &\ = \ a_{t+1}
\end{align}
as desired.
\end{proof}

\section{Investigating the Run-Time of the Zeroing Algorithm}\label{sec:runtime}

\par Consider the Unmodified Zeroing Algorithm as in the preceding section for an arbitrary ZLRR. Theorem \ref{thm:algorithmtermination} states that the eventual behavior of the Zeroing Algorithm can be determined solely by the sign of $Q_0(x)$. In this section we work to bound the run-time of the Zeroing Algorithm, and demonstrate that $Q_0(x)$ is also the primary determinant of the run-time. One difficulty with bounding the Zeroing Algorithm is that each coefficient of $Q_t(x)$ must diverge to negative infinity for the algorithm to terminate. Thus, each coefficient must be tracked. We begin by showing how all coefficients can be accounted for by keeping track of $q(1,t)$ alone.

\begin{proposition}\label{prop:q(1,t)}
After the step at which the sequence $q(1,t)$ becomes non-positive, the Zeroing Algorithm will terminate within the next $k-2$ steps.
\end{proposition}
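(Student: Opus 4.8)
The plan is to exploit the defining feature of a ZLRR, namely that the leading coefficient $c_1$ vanishes. From the system \eqref{eq: coeffrecur} this collapses the top relation to the identity $q(1,t) = q(2,t-1)$, equivalently $q(2,t) = q(1,t+1)$. Together with Lemma \ref{lem:satisfiesRecurrence}, which expresses each $q(n,t)$ as the \emph{nonnegative} combination $\sum_{i=0}^{k-n} c_{n+i}\,q(1,t-(i+1))$ of the values $q(1,t-1),\dots,q(1,t-(k-n+1))$, this lets me control every coefficient row purely through the scalar sequence $q(1,\cdot)$.

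Let $t_0$ be the step after which $q(1,t)$ is permanently non-positive, so that $q(1,s)\le 0$ for all $s\ge t_0$; such a step exists because a terminating run has $Q_0(r)<0$ by Theorem \ref{thm:algorithmtermination}, forcing $q(1,t)\to-\infty$. I would then bound, row by row, the first step guaranteeing $q(n,t)\le 0$. For $n=1$ this is immediate at $t\ge t_0$. For $n=2$ the identity $q(2,t)=q(1,t+1)$ gives $q(2,t)\le 0$ already for $t\ge t_0-1$. For $n\ge 3$, since $q(n,t)$ is a nonnegative combination of $q(1,t-1),\dots,q(1,t-(k-n+1))$, it is non-positive as soon as all of these arguments are admissible, i.e.\ once $t-(k-n+1)\ge t_0$, equivalently $t\ge t_0+(k-n+1)$, regardless of which intermediate $c_{n+i}$ happen to vanish.

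The worst of these constraints over $n\ge 3$ occurs at $n=3$, requiring $t\ge t_0+(k-2)$; every other row is non-positive strictly sooner. Hence at step $t_0+(k-2)$ all $k$ coefficients $q(1,t),\dots,q(k,t)$ are simultaneously non-positive, so $Q_{t_0+(k-2)}(x)$ has no positive coefficient and the algorithm has terminated, within $k-2$ steps of $t_0$.

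The one point that must not be glossed over is the use of $c_1=0$ to eliminate the $n=2$ row from the binding constraints: without it the same counting yields only the weaker bound $k-1$, and it is exactly the ZLRR hypothesis that sharpens this to $k-2$. A secondary bookkeeping point is to fix the meaning of ``$q(1,t)$ becomes non-positive'' as the onset $t_0$ of the terminal sign, and to observe that the argument only ever evaluates $q(1,s)$ at indices $s\in[t_0,\,t_0+(k-3)]$, all lying in this terminal region, so any earlier oscillation of $q(1,\cdot)$ is irrelevant.
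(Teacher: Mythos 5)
Your proof is correct and follows essentially the same route as the paper's: both exploit $c_1=0$ to get $q(2,t)=q(1,t+1)$, express the remaining rows as nonnegative combinations of earlier values of $q(1,\cdot)$ (you invoke Lemma \ref{lem:satisfiesRecurrence} directly where the paper re-runs the cascade from \eqref{eq: coeffrecur} row by row, from $q(k,t)$ down to $q(3,t)$), and identify the $n=3$ row as the last to become non-positive, exactly $k-2$ steps after the onset. Your explicit treatment of the onset $t_0$ of the permanent sign of $q(1,\cdot)$ is a clarification of a point the paper leaves implicit, not a divergence in method.
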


\begin{proof}
\par We assume that $Q_0(r) < 0$, and the Zeroing Algorithm does indeed terminate. Consider the relationships between coefficient sequences starting with Equation \eqref{eq: coeffrecur}. Since we are dealing with ZLRRs, $c_1 = 0,$ we have $q(1,t) = q(2,t-1)$. Thus, it must be the case that $q(2,t)$ will become fully negative exactly one step before $q(1,t)$. Note also that $q(k,t)$ becomes fully negative one step after $q(1,t)$. Following this, in the worst case the coefficient sequences $q(k-1,t), \dots, q(3,t)$ will become fully negative in consecutive steps after $q(k,t)$. This amounts to a maximum of $k-2$ steps that the algorithm can take after $q(1,t)$ becomes fully negative.
\end{proof}

\begin{remark}\label{claim:q(1,t)Init}
 To determine the runtime, we see that it suffices to determine the iteration when $q(1,t)$ becomes nonpositive. Recall that $q(1,t)$ satisfies the recurrence specified by $P(x)$ (Lemma \ref{lem:satisfiesRecurrence}), and thus has a Binet expansion using the roots of $P(x)$ (Theorem \ref{thm:binetexpansion}), with the coefficients of those roots in the expansion being determined by the $k$ initial values $q(1,0),\dots,q(1,k-1)$. By equation \ref{eq: coeffrecur}, we can obtain the initial values
\begin{equation}\label{eq:initCond}
q(1,j) \ = \ \beta_{j+1} + \sum_{i = 2}^{j} c_i~q(1,j-i).
\end{equation}
for $0 \leq j \leq k-1$.
\end{remark}

\par Recall that $P(x)$ has a principal root $r$, which determines the behavior of $q(1,t)$, as it ``dominates'' the behavior of the other roots of $P(x)$ in the Binet expansion. Therefore, we now turn our attention to the \textit{principal coefficient}, which we define to be the coefficient of the principal root in the Binet expansion. For this principal coefficient determines the behavior of $r$ in the Binet expansion, and thus the behavior of $q(1,t)$. We begin with key notation. (Note that in the remainder of this section, we may refer to the principal root $r$ as $r_1$ for ease of indexing.)\\

\begin{definition}\label{def:esp}
We denote the $n$th degree \textbf{\textit{Elementary Symmetric Polynomial}} of $k$ items by
\begin{equation}\label{eq:esp}
S_n(x_1,\dots,x_k) \ = \ x_1 x_2 \cdots x_n + \dots + x_{k+1-n} \cdots x_k \ = \ \sum_{1 \leq i_1 < i_2 < \dots < i_n \leq k} x_{i_1} \cdots x_{i_n}
\end{equation}
where $1 \leq n \leq k$. If $n = 0$, we define $S_0(x_1,\dots,x_k) = 1.$ \\
\end{definition}

\begin{lemma}\label{lem:r1ToS1}
Consider $P(x)$ in \eqref{eq:P(x)}. Then we have
\begin{equation}\label{eq:r1ToS1}
r_1 \ = \ -S_1(r_2,\dots,r_k),
\end{equation}
and
\begin{equation}\label{eq:espRelation}
S_1(r_2,\dots,r_k) S_{n-1}(r_2,\dots,r_k) \ = \ S_n(r_2,\dots,r_k) + (-1)^n c_n.
\end{equation}
\end{lemma}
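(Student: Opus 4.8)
The plan is to derive both identities from Vieta's formulas combined with the elementary ``root-deletion'' recursion for symmetric polynomials. Writing $P(x) = \prod_{i=1}^{k}(x - r_i)$ and expanding the product, the coefficient of $x^{k-n}$ equals $(-1)^{n}\,S_n(r_1,\dots,r_k)$; matching this against the coefficient $-c_n$ in \eqref{eq:P(x)}, I would read off
\[
    S_n(r_1,\dots,r_k)\ = \ (-1)^{n-1}\,c_n, \qquad 1 \le n \le k.
\]
The second ingredient is the deletion recursion, obtained by classifying the $n$-element subsets of $\{r_1,\dots,r_k\}$ according to whether or not they contain $r_1$:
\[
    S_n(r_1,\dots,r_k)\ = \ S_n(r_2,\dots,r_k) + r_1\,S_{n-1}(r_2,\dots,r_k).
\]

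For the first identity \eqref{eq:r1ToS1}, I would specialize to $n=1$. Since we are treating a ZLRR we have $c_1 = 0$, so Vieta gives $S_1(r_1,\dots,r_k) = 0$. The deletion recursion at $n=1$ (using the convention $S_0 = 1$ from Definition \ref{def:esp}) then reads $0 = S_1(r_2,\dots,r_k) + r_1$, which rearranges to $r_1 = -S_1(r_2,\dots,r_k)$, as claimed.

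For the second identity \eqref{eq:espRelation}, I would substitute the Vieta value $S_n(r_1,\dots,r_k) = (-1)^{n-1}c_n$ together with the relation $r_1 = -S_1(r_2,\dots,r_k)$ just established into the deletion recursion. This gives
\[
    (-1)^{n-1}c_n\ = \ S_n(r_2,\dots,r_k) - S_1(r_2,\dots,r_k)\,S_{n-1}(r_2,\dots,r_k),
\]
and isolating the product term, together with the identity $-(-1)^{n-1} = (-1)^n$, yields exactly the desired relation.

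I expect no substantive obstacle here: the entire argument is a matter of correctly tracking the signs coming from Vieta's formulas and from the interaction of the two $(-1)$ factors in the final substitution. The only genuinely structural input is the hypothesis $c_1 = 0$, which is precisely what makes the principal root expressible through the remaining roots in part one and thereby drives the clean telescoping form of part two.
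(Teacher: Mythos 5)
Your proposal is correct and matches the paper's own proof essentially step for step: both rest on Vieta's formulas for $P(x)$ (giving $S_n(r_1,\dots,r_k)=(-1)^{n-1}c_n$), the root-deletion recursion $S_n(r_1,\dots,r_k)=S_n(r_2,\dots,r_k)+r_1\,S_{n-1}(r_2,\dots,r_k)$, and the hypothesis $c_1=0$ to express $r_1=-S_1(r_2,\dots,r_k)$ before substituting back to obtain \eqref{eq:espRelation}. The sign bookkeeping, including the final $-(-1)^{n-1}=(-1)^n$ step, is handled correctly.
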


\begin{proof}
For an arbitrary polynomial of the form $a_k\, x^k + \cdots + a_1 \,x + a_0$ with roots $r_1,\dots,r_k$, Vieta's Formulas can be written as
\[S_n(r_1,\dots,r_k) \ = \ (-1)^n \,\frac{a_{k-n}}{a_k}, \quad \quad \mbox{for}~ 1\leq n \leq k.\]
Given the form of $P(x)$ this simplifies to
\begin{equation}\label{eq:vieta}
c_n \ = \ (-1)^{n+1} S_n(r_1,\dots,r_k)
\end{equation}
for $1 \leq n \leq k$.
Then we know that $c_1 = r_1 + r_2 + \cdots + r_k$. Thus \eqref{eq:r1ToS1} then follows from the fact that $c_1 = 0$, since we are dealing with the characteristic polynomial of a ZLRR.
\par Then we have
\begin{align}
S_n(r_1,\dots,r_k) &\ = \ \sum_{1 \leq i_1 < i_2 < \cdots < i_n \leq k} r_{i_1} \cdots r_{i_n} \nonumber\\ \nonumber\\
&\ = \ \sum_{2 \leq i_1 < i_2 < \cdots < i_n \leq k} r_{i_1} \cdots r_{i_n} + \sum_{2 \leq i_1 < i_2 < \cdots < i_{n-1} \leq k} r_{1} r_{i_1} \cdots r_{i_{k-1}}  \nonumber\\ \nonumber\\
&\ = \ S_n(r_2,\dots,r_k) + r_1 S_{n-1}(r_2,\dots,r_k) \nonumber\\
&\ = \ S_n(r_2,\dots,r_k) - S_1(r_2,\dots,r_k) S_{n-1}(r_2,\dots,r_k), \label{eq:cmRelation}
\end{align}
which implies,

\begin{align}
S_1(r_2,\dots,r_k) S_{n-1}(r_2,\dots,r_k) &\ = \ S_n(r_2,\dots,r_k) - S_n(r_1,\dots,r_k) \nonumber \\
&\ = \ S_n(r_2,\dots,r_k) + (-1)^n (-1)^{n+1} S_n(r_1,\dots,r_k) \nonumber \\
&\ = \ S_n(r_2,\dots,r_k) + (-1)^n \cdot c_n.
\end{align}

\end{proof}

\begin{thm}\label{thm:principalCoefficient}
Consider $P(x)$ in \eqref{eq:P(x)}. Suppose the roots of $P(x)$, $r_1,\dots,r_k$, each have multiplicity 1, and without loss of generality, suppose $r_1 > |r_2| > \cdots > |r_k|$, with $r_1$ being the principal root. Then, considering the Binet expansion $q(1,t) = a_1\, r_1^t + \cdots + a_k\, r_k^t$
we have
\begin{equation}\label{eq:a1Theorem}
a_1 = \frac{Q_0(r_1)}{\prod_{i\,=\,2}^{k}(r_1 - r_i)}.
\end{equation}
\end{thm}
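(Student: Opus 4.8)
The plan is to reduce the claim to the single identity $Q_0(r_1) = a_1\,P'(r_1)$. Since $P(x)=\prod_{i=1}^k(x-r_i)$ gives $P'(r_1)=\prod_{i=2}^k(r_1-r_i)$, equation \eqref{eq:a1Theorem} is equivalent to $a_1 = Q_0(r_1)/P'(r_1)$, so it suffices to establish that identity. The engine driving everything is the exact relation $Q_t(r_1)=r_1^t\,Q_0(r_1)$, which follows by evaluating the recurrence $Q_t(x)=x\,Q_{t-1}(x)-q(1,t-1)\,P(x)$ at the root $r_1$, where the $P$-term vanishes, and iterating. I would combine this with the expansion $Q_t(x)=\sum_{n=1}^k q(n,t)\,x^{k-n}$ and read off the coefficient of $r_1^t$ on both sides.

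First I would record the Binet expansions $q(n,t)=\sum_{i=1}^k a_i^{(n)}\,r_i^t$, noting $a_i^{(1)}=a_i$. Lemma \ref{lem:satisfiesRecurrence} writes $q(n,t)=\sum_{j=0}^{k-n} c_{n+j}\,q(1,t-(j+1))$, so substituting the expansion of $q(1,\cdot)$ isolates the principal coefficient of each $q(n,t)$ in terms of $a_1$:
\[
a_1^{(n)} \ = \ a_1\sum_{j=0}^{k-n} c_{n+j}\,r_1^{-(j+1)}.
\]
Next I would use that $Q_t(r_1)=\sum_{n=1}^k q(n,t)\,r_1^{k-n}$ equals $r_1^t\,Q_0(r_1)$ for every $t$. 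Writing the left side as $\sum_i\big(\sum_n a_i^{(n)}\,r_1^{k-n}\big)\,r_i^t$ and using that the geometric sequences $\{r_i^t\}$ are linearly independent (the $r_i$ are distinct), matching the coefficient of $r_1^t$ forces $\sum_{n=1}^k a_1^{(n)}\,r_1^{k-n}=Q_0(r_1)$. Equivalently, one may divide by $r_1^t$ and let $t\to\infty$, using that $r_1$ dominates. Substituting the formula for $a_1^{(n)}$ gives
\[
Q_0(r_1)\ = \ a_1\sum_{n=1}^k r_1^{k-n}\sum_{j=0}^{k-n} c_{n+j}\,r_1^{-(j+1)}.
\]

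It then remains to show the double sum equals $P'(r_1)$, and this is where the real work lies; everything else is assembly. Setting $m=n+j$ collapses each inner term to $c_m\,r_1^{k-m-1}$, independent of $n$, and swapping the order of summation — for fixed $m$ the index $n$ runs from $1$ to $m$ — gives $\sum_{m=1}^k m\,c_m\,r_1^{k-m-1}$. To identify this with $P'(r_1)=k\,r_1^{k-1}-\sum_{m=1}^k (k-m)\,c_m\,r_1^{k-m-1}$, I would invoke $P(r_1)=0$ in the form $\sum_{m=1}^k c_m\,r_1^{k-m}=r_1^k$, i.e.\ $\sum_{m=1}^k c_m\,r_1^{k-m-1}=r_1^{k-1}$; then $P'(r_1)-\sum_m m\,c_m\,r_1^{k-m-1}=k\,r_1^{k-1}-k\sum_m c_m\,r_1^{k-m-1}=0$. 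The anticipated obstacle is purely this bookkeeping: the reindexing $m=n+j$ together with the cancellation powered by $P(r_1)=0$. Concluding, $Q_0(r_1)=a_1\,P'(r_1)=a_1\prod_{i=2}^k(r_1-r_i)$, which rearranges to the desired formula.
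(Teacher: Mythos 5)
Your proposal is correct, and it takes a genuinely different route from the paper. The paper computes $a_1$ by brute force from the initial values \eqref{eq:initCond}: it sets up the Vandermonde linear system, applies Cramer's rule, expands $\det(A_1)$ along the first column using a formula for ``punctured'' Vandermonde determinants (imported from the reference [KKL]), and then runs an induction powered by the symmetric-function identities of Lemma \ref{lem:r1ToS1} to recognize the resulting alternating sum as $(-1)^{k+1}Q_0(r_1)$. You instead never touch determinants: you reuse the two facts already proved in Section 3 --- the invariant $Q_t(r_1)=r_1^t\,Q_0(r_1)$ and Lemma \ref{lem:satisfiesRecurrence} --- to express the principal coefficient of each $q(n,t)$ as $a_1\sum_{j}c_{n+j}r_1^{-(j+1)}$, match the coefficient of $r_1^t$ in $Q_t(r_1)=\sum_n q(n,t)r_1^{k-n}$ (your limit argument $t\to\infty$ is the cleanest justification, and only needs $r_1>|r_i|$), and reduce everything to the identity $\sum_{m=1}^k m\,c_m\,r_1^{k-m-1}=P'(r_1)$, which follows from $P(r_1)=0$, together with $P'(r_1)=\prod_{i\ge2}(r_1-r_i)$. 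Your bookkeeping checks out (the reindexing $m=n+j$, the count of $m$ pairs, and the cancellation are all right), and the only minor points to flag are that Lemma \ref{lem:satisfiesRecurrence} holds for $t$ sufficiently large --- which is all the coefficient-matching needs --- and that $r_i\neq0$ (guaranteed by $c_k>0$) so the negative powers make sense. What the two approaches buy: the paper's computation produces explicit determinant/symmetric-polynomial formulas that fit its Section 4 toolkit, while yours is shorter, avoids the external [KKL] citation, makes the answer transparent as $a_1=Q_0(r_1)/P'(r_1)$ in direct analogy with Theorem \ref{thm: Binet Generalization} (where the coefficients are $1/P'(r_i)$), and --- notably --- appears to extend verbatim to roots of higher multiplicity, since the dominance argument tolerates polynomial coefficients on the subdominant roots and $P'(r_1)=\prod_{j\ge2}(r_1-r_j)^{m_j}$ in that case; in other words, your method would settle Conjecture \ref{conj:fullBinetConj}, precisely the case the paper says its Vandermonde approach cannot reach.
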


\begin{proof}
Using Equation \eqref{eq:initCond} to find the initial $k$ values of $q(1,t)$, we note that the coefficients $a_1,\cdots,a_k$ are the solutions to the following linear system:
\begin{equation}
\begin{pmatrix}
1 & 1 & 1 & \cdots & 1 \\
r_1 & r_2 & r_3 & \cdots & r_k \\
r_1^2 & r_2^2 & r_3^2 & \cdots & r_k^2 \\
\vdots & \vdots & \vdots & \ddots & \vdots\\
r_1^{k-1} & r_2^{k-1} & r_3^{k-1} & \cdots & r_k^{k-1}
\end{pmatrix}
\begin{pmatrix}
a_1 \\
a_2 \\
a_3 \\
\vdots \\
a_k
\end{pmatrix}
=
\begin{pmatrix}
\beta_1 \\
\beta_2 \\
\beta_2 + c_2 \beta_1 \\
\vdots\\
\beta_{k} + \sum_{i = 2}^{k-1} c_i ~ q(1,k-1-i)
\end{pmatrix}.
\end{equation}
\par To find $a_1$, we can use Cramer's Rule. Let $A$ denote the matrix of roots. If we let $A_1$ be the matrix formed by substituting the first column of $A$ with the column vector of initial terms of $q(1,t)$, then we have $a_1 = \det(A_1)/\det(A)$. Because $A$ is a Vandermonde matrix we have
\begin{equation}
\det(A) = \prod_{1\leq i < j \leq k} (r_j - r_i).
\end{equation}
We can then use the Laplace expansion to obtain
\begin{equation}\label{eq:A1}
   \det(A_1) = \sum_{n\,=\,1}^k ~ q(1,n-1)(-1)^{n+1} \det(M_{n1}),
\end{equation}
where $M_{n1}$ is the $n, 1$ minor of $A_1$.

\par Note that each minor -- except for $M_{k1}$ -- in \eqref{eq:A1} is not a Vandermonde matrix due to its missing row of geometric terms. However, the determinants of these ``punctured'' Vandermonde matrices have a similar form to the determinant of a regular Vandermonde matrix involving elementary symmetric polynomials of the roots. By the results found in \cite{KKL}, we can write
\begin{equation}
\det(M_{n1}) \ = \ S_{k-n}(r_2,\dots,r_k) \cdot \prod_{2\leq i < j \leq k} (r_j - r_i).
\end{equation}
So, we have
\begin{align}
 \det(A_1) &\ = \ \sum_{n=1}^k ~ \left[q(1,n-1) \cdot (-1)^{n+1} \cdot S_{k-n}(r_2,\dots,r_k) \cdot \prod_{2\leq i < j \leq k} (r_j - r_i)\right] \nonumber \\
 &\ = \ \left[\prod_{2\leq i < j \leq k} (r_j - r_i)\right] \cdot \sum_{n\,=\,1}^k ~ \left[q(1,n-1) \cdot (-1)^{n+1} \cdot S_{k-n}(r_2,\dots,r_k)\right].
 \end{align}
 So, we can solve for $a_1$.
 \begin{align}
 a_1 \ = \ \frac{\det(A_1)}{\det(A)} &\ = \ \frac{\sum_{n\,=\,1}^k ~ q(1,n-1) \cdot (-1)^{n+1} \cdot S_{k-n}(r_2,\dots,r_k)}{\prod_{i\,=\,2}^{k}(r_i - r_1)} \nonumber\\ \nonumber \\
&\ = \ \frac{\sum_{n=1}^k ~ q(1,n-1) \cdot (-1)^{n+1} \cdot S_{k-n}(r_2,\dots,r_k)}{(-1)^{k+1} \cdot \prod_{i=2}^{k}(r_1 - r_i)}. \label{eq:intermediate}
\end{align}
Next, we shall demonstrate that
\begin{equation}\label{eq:induction}
(-1)^{m+1} \cdot \sum_{n\,=\,1}^m \beta_n\, r_1^{m-n} \ = \  \sum_{n\,=\,1}^m ~ q(1,n-1) \cdot (-1)^{n+1} \cdot S_{m-n}(r_2,\dots,r_k)
\end{equation}
by induction on $m$. (However, we note that implicitly $m \leq k$, since we have not defined $\beta_i$ where $i > k$.)\\

When $m=k$, this implies that
\begin{equation}
(-1)^{k+1} \cdot Q_0(r_1) = \sum_{n\,=\,1}^{k} ~ \left[q(1,n-1) \cdot (-1)^{n+1} \cdot S_{m-n}(r_2,\dots,r_k)\right].
\end{equation}

\par Therefore, by simplifying \eqref{eq:intermediate} we have \eqref{eq:a1Theorem}. \\

\textbf{\textit{Base Case ($m=3$):}} We have, by plugging in \eqref{eq:vieta} and \eqref{eq:cmRelation},
\begin{align}
&\sum_{n\,=\,1}^3 ~ q(1,n-1) \cdot (-1)^{n+1} \cdot S_{3-n}(r_2,\dots,r_k) \nonumber\\
&\ \ \ \ \ \ = \ \beta_1 \,S_{2}(r_2,\dots,r_k) - \beta_2\, S_{1}(r_2,\dots,r_k) + \beta_1\, c_2 + \beta_3\nonumber\\
&\ \ \ \ \ \ = \ \beta_1\, S_{2}(r_2,\dots,r_k) - \beta_2\, S_{1}(r_2,\dots,r_k) + \beta_1\nonumber\\
&\ \ \ \ \ \ = \ \beta_1 \,r_1^2 + \beta_2 \,r_1 + \beta_3\nonumber\\
&\ \ \ \ \ \ = \  (-1)^{3+1} \cdot \sum_{n\,=\,1}^3 \beta_n\, r_1^{3-n}
\end{align}

\par \textbf{\textit{Inductive Step:}} Assume \eqref{eq:induction} holds for all $m' < m$. Then for $m' = m-1$ we have

\begin{equation}
(-1)^{m} \cdot \sum_{n=1}^{m-1} \beta_n r_1^{m-1-n} \ = \  \sum_{n=1}^{m-1} ~ q(1,n-1) \cdot (-1)^{n+1} \cdot S_{m-1-n}(r_2,\dots,r_k).
\end{equation}
So, we have
\begin{align}
&(-1)^{m+1} \cdot \sum_{n=1}^{m} \beta_n r_1^{m-n} \ = \ -r_1 \cdot \sum_{n=1}^{m-1} ~ \left[q(1,n-1) \cdot (-1)^{n+1} \cdot S_{m-1-n}(r_2,\dots,r_k)\right] + \beta_m \nonumber\\
&\ = \ S_1(r_2,\dots,r_k) \cdot \sum_{n=1}^{m-1} ~ \left[q(1,n-1) \cdot (-1)^{n+1} \cdot S_{m-1-n}(r_2,\dots,r_k)\right] + \beta_m \nonumber\\
&\ = \ \sum_{n=1}^{m-1} ~ \left[q(1,n-1) \cdot (-1)^{n+1} \cdot \left[S_{m-n}(r_2,\dots,r_k) + (-1)^{m-n} \cdot c_{m-n}\right]\right] + \beta_m \nonumber\\
&\ = \ \sum_{n=1}^{m-1} ~ \left[q(1,n-1) \cdot (-1)^{n+1} \cdot S_{m-n}(r_2,\dots,r_k)\right] + (-1)^{m+1} \cdot \sum_{n=1}^{m-1} ~ \left[q(1,n-1) \cdot c_{m-n}\right]+ \beta_m \nonumber\\
&\ = \ \sum_{n=1}^{m} ~ \left[q(1,n-1) \cdot (-1)^{n+1} \cdot S_{m-n}(r_2,\dots,r_k)\right] \quad \quad \mbox{by \eqref{eq:initCond}.}
\end{align}

\par 
\end{proof}

\begin{corollary}\label{cor:rootsCloseTo1}
As $r \to 1$, we have $t_0 \to \infty$, where $t_0$ denotes the number of steps taken after the modified Zeroing Algorithm of Lemma \ref{lem:modifiedAlgorithm} reverts back to the unmodified form.
\end{corollary}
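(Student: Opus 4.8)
The plan is to express $t_0$ in terms of the principal coefficient of the restarted sequence $q(1,t)$ and to show that this coefficient shrinks to $0$ as $r\to1$, which forces the dominant geometric term to need ever more steps to overtake the subdominant ones.

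First I would reduce everything to tracking a single coefficient sequence. After the modified algorithm reverts at step $m$, what follows is precisely the unmodified Zeroing Algorithm started from $Q_m(x)$; relabel $Q_m$ as the initial polynomial $Q_0$ of this fresh run. By Theorem~\ref{thm:algorithmtermination} the run terminates, since $Q_m(r)<0$, and by Proposition~\ref{prop:q(1,t)} it does so within $k-2$ steps of the first index at which $q(1,t)$ becomes nonpositive. Hence, up to a bounded additive error, $t_0$ equals the first time $q(1,t)$ turns nonpositive, and it suffices to show this index diverges as $r\to1$.

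Next I would compute the principal coefficient. Applying Theorem~\ref{thm:principalCoefficient} with $Q_0=Q_m$ gives the coefficient of $r^t$ in the Binet expansion $q(1,t)=a_1\,r^t+\sum_{i=2}^{k}a_i\,r_i^t$ as $a_1=Q_m(r)/\prod_{i=2}^{k}(r-r_i)=Q_m(r)/P'(r)$, the last equality because $r$ is a simple root, so $P'(r)=\prod_{i=2}^{k}(r-r_i)$. By Lemma~\ref{lem:modifiedAlgorithm}(iii) we have $Q_m(r)=-r^m\,\Gamma_m(r)$, and in the conversion of Corollary~\ref{cor: conversion} we take $m=2$ with $\Gamma_2(r)=r-1$, so $a_1=-r^2(r-1)/P'(r)$. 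In particular $a_1<0$, consistent with divergence to $-\infty$, and $|a_1|\to0$ as $r\to1^+$ provided $P'(r)$ stays bounded away from $0$.

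Finally I would turn the smallness of $|a_1|$ into a lower bound on the termination time. Since $a_1<0$ and $|r_i|<r$ for $i\ge2$, the sequence $q(1,t)$ can become nonpositive only once $|a_1|\,r^t$ dominates the remainder $\left|\sum_{i\ge2}a_i\,r_i^t\right|$; bounding the remainder by a constant multiple of $|r_2|^t$ and solving the resulting inequality shows the first nonpositive index is at least of order $\log(1/|a_1|)/\log(r/|r_2|)$. As $r\to1$ the numerator blows up because $|a_1|\to0$, which drives $t_0\to\infty$. The main obstacle is making this uniform over the family of ZLRRs with $r$ near $1$: one must rule out $P'(r)$ decaying as fast as $r-1$, so that $|a_1|$ genuinely tends to $0$, and keep $\log(r/|r_2|)$ bounded away from $0$, so that the quotient still diverges. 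I would control $P'(r)\neq0$ via the simplicity of the principal root in Lemma~\ref{lem:greatestroot} together with a continuity argument, and handle the subdominant gap either by a uniform bound $|r_2|<r$ or, more robustly, by invoking that the integer-valued sequence $q(1,t)$ cannot fall to a negative value faster than its dominant geometric term allows.
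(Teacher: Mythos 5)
Your proposal follows essentially the same route as the paper's proof: invoke Lemma \ref{lem:modifiedAlgorithm}(iii) with $m=2$, $\gamma_1=1$, $\gamma_2=-1$ to get $Q_2(r)=-r^k(r-1)\to 0^-$ as $r\to 1$, feed this into Theorem \ref{thm:principalCoefficient} to conclude that the principal coefficient $a_1\to 0$, and deduce that the dominant geometric term takes ever longer to take over, forcing $t_0\to\infty$. Your only slip is writing $Q_m(r)=-r^m\,\Gamma_m(r)$ rather than $-r^k\,\Gamma_m(r)$ (harmless for the limit), and your closing quantitative paragraph, together with the uniformity caveats you flag about $P'(r)$ and the gap $r/|r_2|$, is in fact more careful than the paper, which asserts this final step without argument.
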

\begin{remark}
Corollary \ref{cor:rootsCloseTo1} tells us that ZLRRs with principal roots closest to $1$ will take the longest to convert into a derived PLRR.
\end{remark}
\begin{proof}
    From Lemma \ref{lem:modifiedAlgorithm} we know that  $Q_m(r) = -r^k \cdot \Gamma_m(r)$. This is the iteration when the modified Zeroing Algorithm reverts to the unmodified Zeroing Algorithm. Thus for $m = 2, \gamma_1 = 1,~\mbox{and}~ \gamma_2 = -1,$ we have $Q_2(r) = -r^k~ (r-1).$ Recall that this configuration of the modified Zeroing Algorithm results in the "minimal" derived PLRR of a given ZLRR. So, as $r\to 1$, we have $Q_2(r)\to 0^-$. (Recall that $Q_2(r)$ is equivalent to $Q_0(r)$ of the unmodified Zeroing Algorithm.) Then by Theorem \ref{thm:principalCoefficient} we know that since $Q_0(r) \to 0^-$, it also follows that $a_1 \rightarrow 0$ and thus the principal root of the Binet expansion of $q(1,t)$ takes longer and longer to dominate, implying that $t_0 \rightarrow \infty$.
    \end{proof}

Note that the above conclusions only apply to ZLRRs whose roots have multiplicity $1$. Extending Theorem \ref{thm:principalCoefficient} to cover ZLRRs with roots of any multiplicity is more difficult because the Binet expansion of $q(1,t)$ becomes more complicated, which negates the use of Vandermonde matrices in the proof of Theorem \ref{thm:principalCoefficient}. In the more general case, we conjecture the following.

\begin{conjecture}\label{conj:fullBinetConj}
If the roots of $P(x)$ are $r_1,r_2,\dots,r_i$, with respective multiplicities $1$, $m_2$, $\dots$, $m_i$ such that $m_j \geq 1$ with $2 \leq j \leq i \leq k$, then for the coefficient $a_1$ of the principal root in the Binet expansion of $q(1,t)$ we have
\begin{equation}
a_1 = \frac{Q_0(r_1)}{\prod_{j\,=\,2}^{i}(r_1 - r_j)^{m_j}}.
\end{equation}
\end{conjecture}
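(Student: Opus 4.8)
The plan is to sidestep the confluent Vandermonde determinant that blocks a direct generalization of the proof of Theorem \ref{thm:principalCoefficient}, and instead extract $a_1$ analytically as the residue of the ordinary generating function of $q(1,t)$ at its dominant pole. Because $r_1$ is a \emph{simple} root of $P(x)$, the corresponding pole is simple, and a simple pole's residue is completely insensitive to the multiplicities $m_2,\dots,m_i$ of the remaining roots. That is exactly the feature that was missing in the matrix argument, so this reformulation should make the multiple-root case no harder than the distinct-root case.

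First I would form $G(z):=\sum_{t\ge0}q(1,t)\,z^t$ and invoke Lemma \ref{lem:satisfiesRecurrence}, which says $q(1,t)$ obeys the recurrence with characteristic polynomial $P(x)$. Writing $D(z):=1-c_1z-\cdots-c_kz^k=z^kP(1/z)$, the recurrence forces $D(z)\,G(z)=A(z)$ for a polynomial $A(z)$ of degree at most $k-1$: for every $n\ge k$ the coefficient of $z^n$ in $D(z)G(z)$ is $q(1,n)-c_1q(1,n-1)-\cdots-c_kq(1,n-k)=0$. Reading off the lower coefficients and comparing with the defining relation \eqref{eq:initCond} for the initial values (using $c_1=0$) gives $[z^n]A(z)=\beta_{n+1}$, so
\[
A(z)\ =\ \beta_1+\beta_2z+\cdots+\beta_kz^{k-1}\ =\ z^{k-1}\,Q_0(1/z).
\]
In particular $A(1/r_1)=Q_0(r_1)/r_1^{\,k-1}$, which is precisely how $Q_0(r_1)$ enters the final formula.

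Next I would factor $D(z)=(1-r_1z)\prod_{j=2}^i(1-r_jz)^{m_j}$ (using $\sum_{j}m_j=k$ and that $r_1$ is simple, so that $G=A/D$ is a proper rational function) and take its partial-fraction decomposition. The coefficient of the simple term $1/(1-r_1z)$ is exactly the Binet coefficient $a_1$, since expanding that term reproduces $a_1r_1^t$, whereas each factor $1/(1-r_jz)^s$ expands to a polynomial-in-$t$ multiple of $r_j^t$, matching the structure of the Binet expansion for a root of multiplicity $m_j$. Extracting this coefficient by the residue limit gives
\[
a_1\ =\ \lim_{z\to 1/r_1}(1-r_1z)\,G(z)\ =\ \frac{A(1/r_1)}{\prod_{j=2}^i(1-r_j/r_1)^{m_j}},
\]
and then clearing powers of $r_1$ via $\prod_{j=2}^i(1-r_j/r_1)^{m_j}=r_1^{-(k-1)}\prod_{j=2}^i(r_1-r_j)^{m_j}$ cancels the $r_1^{k-1}$ coming from $A(1/r_1)$ and yields the claimed $a_1=Q_0(r_1)/\prod_{j=2}^i(r_1-r_j)^{m_j}$.

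The main obstacle is the bookkeeping in the third step: carefully justifying that the partial-fraction coefficient at $z=1/r_1$ \emph{equals} the Binet coefficient of $r_1^t$ (not merely a constant multiple of it), and disposing of the degenerate case $Q_0(r_1)=0$, where $A(1/r_1)=0$ cancels the pole and both sides of the identity vanish. An alternative that stays closer to the paper's Cramer's-rule argument is a perturbation limit: split each root $r_j$ of multiplicity $m_j$ into $m_j$ nearby simple roots, apply Theorem \ref{thm:principalCoefficient} to the perturbed polynomial, and let the splitting tend to zero. Since the numerator $Q_0(r_1)$ depends only on the fixed input $\beta_1,\dots,\beta_k$ and not on the $c_i$, only the coalescing factors in the denominator must be tracked as the roots merge. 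I expect the generating-function route to be the cleaner of the two, and it resolves the conjecture in full rather than leaving it open.
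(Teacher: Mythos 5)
You should know at the outset that the paper contains \emph{no} proof of this statement: it is stated as Conjecture \ref{conj:fullBinetConj} and left open, precisely because the Cramer's-rule/Vandermonde computation behind Theorem \ref{thm:principalCoefficient} breaks down when some root has multiplicity greater than one. Your argument is correct and, written out in full, settles the conjecture. Each step checks: by Lemma \ref{lem:satisfiesRecurrence} and equation \eqref{eq:initCond} (with $c_1=0$), the generating function satisfies $D(z)G(z)=A(z)$ where $D(z)=z^kP(1/z)$ and $A(z)=\beta_1+\beta_2z+\cdots+\beta_kz^{k-1}=z^{k-1}Q_0(1/z)$; since $P$ is monic with $P(0)=-c_k\neq0$ and $\sum_j m_j=k$, one has $D(z)=(1-r_1z)\prod_{j\geq2}(1-r_jz)^{m_j}$ with every $r_j\neq0$; the partial-fraction coefficient of $1/(1-r_1z)$ equals the Binet coefficient $a_1$ because $[z^t](1-r_jz)^{-s}=\binom{t+s-1}{s-1}r_j^t$ reproduces exactly the polynomial-times-$r_j^t$ structure of Theorem \ref{thm:binetexpansion}, and Binet coefficients are unique by linear independence of the $k$ sequences $t^ur_j^t$; finally, $(1-r_1z)G(z)=A(z)\big/\prod_{j\geq2}(1-r_jz)^{m_j}$ is continuous at $z=1/r_1$, so the residue limit is $A(1/r_1)\big/\prod_{j\geq2}(1-r_j/r_1)^{m_j}$, and the powers of $r_1$ cancel as you say, yielding $a_1=Q_0(r_1)\big/\prod_{j\geq2}(r_1-r_j)^{m_j}$. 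Note that the degenerate case $Q_0(r_1)=0$ you worried about needs no separate treatment: the same cancellation shows the limit is $0$ and both sides vanish. Compared with the paper's method for the simple-root case (Cramer's rule, the punctured-Vandermonde determinants of \cite{KKL}, and an induction on elementary symmetric polynomials), your residue argument is shorter and strictly more general: the pole at $z=1/r_1$ is simple no matter what the other multiplicities are, which is exactly why the obstruction the authors cite never arises, and specializing to $m_j=1$ even gives a cleaner proof of Theorem \ref{thm:principalCoefficient} itself. The only gap to fill in a complete write-up is the standard linear-independence lemma identifying partial-fraction coefficients with Binet coefficients; your perturbation alternative, by contrast, would require a genuine continuity argument for Binet coefficients under root collisions and is best set aside.
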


In order to work towards finding the true bound of the Zeroing Algorithm, we also wish to quantify the relationship between $Q_0(r)$ and the run-time beyond the general tendencies that our current results provide. Notably, Theorem \ref{thm:principalCoefficient} suggests that as $Q_0(r) \to 0^-$, the run-time becomes unbounded, since the principal root in the Binet expansion of $q(1,t)$ will take longer and longer to dominate.

Some experimentation provides a way to visualize the relationship; see Figure \ref{sim}.

\begin{figure}[ht!]
	\begin{center}
	\includegraphics[scale=.4]{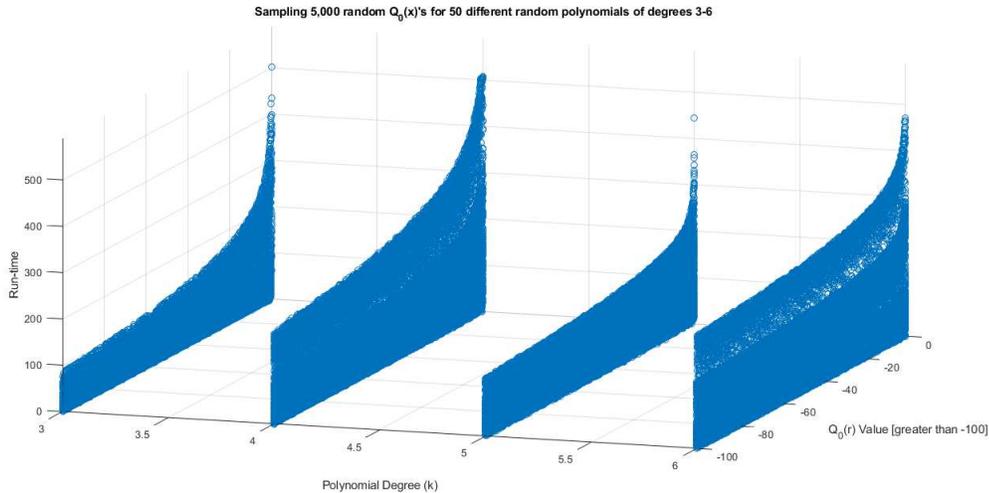}
	\end{center}
    \caption{The results of a MATLAB simulation that generated 50 random $P(x)$ polynomials for each degree 3 to 6, and sampled 5,000 random $Q_0(x)$'s for each random $P(x)$. A strong inverse relationship can be seen between $Q_0(r)$ and the run-time.}
    \label{sim}
\end{figure}

\par The above observations inspire us to conjecture the following concerning the bound of the Zeroing Algorithm:
\begin{conjecture}\label{conj:trueBound}
$Q_0(r)$ and the run-time have an inverse relationship.
\end{conjecture}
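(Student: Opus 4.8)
The plan is to make the informal claim precise as a statement about the first-passage time of the sequence $q(1,t)$ into the negatives, and then read off its dependence on $Q_0(r)$ from the Binet expansion. Fix the characteristic polynomial $P(x)$, so that its roots $r_1=r>|r_2|\ge\cdots\ge|r_k|$ and the quantity $P'(r)=\prod_{i=2}^{k}(r-r_i)>0$ are fixed, and assume for the main case that the roots are simple. Let $T$ denote the termination step of the (unmodified) Zeroing Algorithm, and let $T_1$ be the first index after which $q(1,t)$ is permanently non-positive. By Proposition \ref{prop:q(1,t)} (together with the observation that once all $q(n,t)$ are non-positive they remain so), $T$ differs from $T_1$ by at most $k-2$, which is $O(1)$ for fixed $P$. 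Hence it suffices to show that $T_1$ is, to leading order, a strictly decreasing function of $Q_0(r)$ on the range $Q_0(r)<0$ where the algorithm terminates (Theorem \ref{thm:algorithmtermination}).

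Next I would invoke the Binet expansion of $q(1,t)$ together with Theorem \ref{thm:principalCoefficient}. Since $q(1,t)$ satisfies the recurrence of $P(x)$ (Lemma \ref{lem:satisfiesRecurrence}), we may write $q(1,t)=a_1 r^t+\sum_{i=2}^{k}a_i r_i^t$ with $a_1=Q_0(r)/P'(r)$. Because termination forces $Q_0(r)<0$ and $P'(r)>0$, we have $a_1<0$, so the dominant term drives $q(1,t)$ to $-\infty$. Writing $|a_1|=|Q_0(r)|/P'(r)$ and bounding the subdominant part by $\sum_{i\ge 2}|a_i|\,|r_i|^t\le A\,|r_2|^t$, the sign of $q(1,t)$ is governed by the competition between $|a_1|r^t$ and $A|r_2|^t$. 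This gives the upper bound: as soon as $t>(\log A-\log|a_1|)/\log(r/|r_2|)$ we are guaranteed $q(1,t)<0$, whence
\[
T_1 \ \le \ \frac{-\log|Q_0(r)| + \log\!\big(A\,P'(r)\big)}{\log(r/|r_2|)}.
\]
As $A,P'(r),r,|r_2|$ depend only on $P$ and on the subdominant projections of the input, this already exhibits the blow-up of the run-time (logarithmically) as $Q_0(r)\to 0^-$ and its shrinkage as $Q_0(r)$ becomes more negative, i.e.\ the qualitative inverse relationship of Figure \ref{sim}.

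To upgrade this to a genuine two-sided asymptotic I would establish a matching lower bound on $T_1$, namely that $q(1,t)$ stays positive until $t$ is within an additive constant of the same threshold; this is, I expect, the main obstacle. The difficulty is twofold. First, the subdominant coefficients $a_2,\dots,a_k$ are not independent of $a_1$: all are linear images of the same input $(\beta_1,\dots,\beta_k)$ under the inverse Vandermonde map, so one cannot send $Q_0(r)\to 0^-$ while holding $A$ fixed. The clean formulation is therefore to fix a family of inputs along which the subdominant projection stays bounded (for instance, perturbing only $\beta_k$) and to prove the asymptotic along such families. Second, when the subdominant roots are complex the tail $\sum_{i\ge 2}a_i r_i^t$ oscillates, so $q(1,t)$ can dip negative and return positive near the threshold; controlling this requires a lower-envelope estimate, e.g.\ $\big|\sum_{i\ge 2}a_i r_i^t\big|\ge c\,|r_2|^t$ along a positive-density set of $t$, obtained from an equidistribution argument on the arguments of $r_2^t$. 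For roots of higher multiplicity one would additionally need the multiplicity-corrected principal coefficient of Conjecture \ref{conj:fullBinetConj} and polynomial-times-geometric tail bounds. Granting the lower envelope, combining the two bounds yields $T_1=-\log|Q_0(r)|/\log(r/|r_2|)+O(1)$, the precise (logarithmic) inverse law underlying the conjecture.
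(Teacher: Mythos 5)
First, a framing point: the statement you are attacking is Conjecture \ref{conj:trueBound}; the paper offers no proof of it, only the MATLAB evidence of Figure \ref{sim} and the heuristic remark following Theorem \ref{thm:principalCoefficient}, so your proposal is judged on its own merits rather than against a proof in the paper. The first half of your argument is correct and is a genuine sharpening of the paper's heuristic: reducing the termination time $T$ to the first-passage time $T_1$ of $q(1,t)$ via Proposition \ref{prop:q(1,t)} (with $T_1\le T\le T_1+k-2$), writing $q(1,t)=a_1r^t+\sum_{i\ge2}a_ir_i^t$ with $a_1=Q_0(r)/P'(r)<0$ by Theorems \ref{thm:principalCoefficient} and \ref{thm:algorithmtermination}, and comparing $|a_1|r^t$ against $A|r_2|^t$ with $A=\sum_{i\ge2}|a_i|$ does give the one-sided bound
\begin{equation*}
T\ \le\ \frac{\log\bigl(A\,P'(r)\bigr)-\log|Q_0(r)|}{\log\bigl(r/|r_2|\bigr)}\ +\ (k-1),
\end{equation*}
a rigorous ``more negative $Q_0(r)$ forces faster termination'' statement, which also replaces the vague phrase ``inverse relationship'' by a concrete logarithmic law.

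However, the other direction --- that the run-time actually diverges as $Q_0(r)\to0^-$ --- is not merely ``the main obstacle''; as the conjecture is literally stated it is \emph{false}, and your proposed repair does not yet close the gap. Two concrete obstructions. (i) The algorithm is homogeneous: replacing $(\beta_1,\dots,\beta_k)$ by $(\lambda\beta_1,\dots,\lambda\beta_k)$ with $\lambda>0$ scales every $Q_t$ by $\lambda$ and leaves the run-time unchanged, while $Q_0(r)$ scales by $\lambda$; along such a ray the run-time is constant while $Q_0(r)$ sweeps out all negative reals, so the run-time cannot be any non-constant function of $Q_0(r)$ alone. (ii) Taking $\beta_1=\cdots=\beta_{k-1}=0$, $\beta_k=-\epsilon$ gives $Q_0(r)=-\epsilon\to0^-$ with run-time identically $0$. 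Any true theorem therefore requires a normalization (bounding the subdominant projection $A$ below and above, or fixing $\|\beta\|$), which is exactly what the scale-invariant combination $\log\bigl(A\,P'(r)\bigr)-\log|Q_0(r)|$ in your own bound is telling you; you gesture at this (``fix a family\dots perturbing only $\beta_k$'') but never pin down the class of inputs. Moreover, even on such a family the matching lower bound on $T_1$ needs you to exhibit times near the threshold where $q(1,t)>0$, and since every subdominant root of a ZLRR characteristic polynomial is negative real or complex (Descartes, Lemma \ref{lem:greatestroot}), the tail $\sum_{i\ge2}a_ir_i^t$ genuinely oscillates; the lower-envelope/equidistribution estimate you invoke is precisely the missing ingredient, and neither your sketch nor the paper supplies it (the multiple-root case additionally rests on the unproven Conjecture \ref{conj:fullBinetConj}). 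In short: keep the upper bound, but the proposal does not prove the conjecture, and the conjecture itself must be reformulated (normalized) before the lower bound can even be true.
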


\section{Conclusion and Future work}\label{sec:conclusion}

We have introduced two distinct ways to consider decompositions arising from ZLRSes.

\begin{itemize}
    \item As we saw from the first method, we can define decompositions in such a way that we have existence, but not uniqueness. Is there a different definition such that we have uniqueness, but not existence? Is it possible to have both existence and uniqueness, or can we prove that having both is generally impossible for ZLRSes? \\ \

    \item In terms of bounding the run-time of the Zeroing Algorithm, the next steps are to prove Conjectures \ref{conj:fullBinetConj} and \ref{conj:trueBound}, or similar run-time results if it turns out that these do not hold. \\ \

    \item The Zeroing Algorithm has proven a powerful tool for studying linear recurrences analytically; how does it provide information on more discrete questions such as decompositions with ZLRSes? Are specific sets of initial values necessary for a decomposition to have desirable properties? Are there such properties that are inherent in the recurrence relation itself, rather than being contingent on a specific sequence produced by the initial values?
\end{itemize}

\appendix

\section{Some Examples of Running the Zeroing Algorithm}\label{app:examples}

Consider the recurrence relation
\[H_{n+1}=2H_{n-1}+H_{n-2},\]
which has characteristic polynomial $P(x)=x^3-2x-1$ (principal root $r=(1+\sqrt{5})/2$), where we have the coefficients $c_1=0,c_2=2,c_3=1$. Suppose we are given $\beta_1=3$, $\beta_2=-2$, $\beta_3=-5$; we run the algorithm as follows:

\begin{tabular}{ r r r r r r r r r r r}

$3$ & $-2$ & $-5$ &   &  &&&& $Q_0(x)=3x^2-2x-5$ \\
 $-3$ & $0$ & $6$ & $3$ &    &    &&&\\
  \cline{1-4}
  & $-2$ & $1$ & $3$ &    &    & &&$Q_1(x)=-2x^2\,+\,x\,+\,3$\\
  &  $2$ & $0$  & $-4$ & $-2$ &  &&&\\
  \cline{2-5}
  &   &  $1$ & $-1$ & $-2$ &   &&&$Q_2(x)=x^2\,-\,x-2$\\
  &   & $-1$  & $0$ & $2$ & $1$ &&&\\
  \cline{3-6}
  &   &  & $-1$ & $0$ & $1$  &&&$Q_3(x)=-x^2\,-\,0x+1$\\
  &   &  & $1$ & $0$ & $-2$ & $-1$ &&\\
  \cline{4-7}
  &   &  & & $0$ & $-1$  & $-1$ &&$Q_4(x)=0x^2\,-\,x-1$\\
\end{tabular}\\ \

We reach termination on step $4$, since $Q_4$ does not have positive coefficients. Note that the Zeroing Algorithm is named for the first (omitted) coefficient of $0$ following each step. \\

Suppose that given the same recurrence relation, and initial values $a_0=3,a_1=-2,a_3=1$, we wish to determine whether the recurrence sequence diverges to negative infinity.\\

Using the method introduced in Theorem \ref{thm:algorithm determination}, we first determine the values of

\[d_2\ = \ a_1c_1\ = \ 0,\ \ \ \  \ d_3\ =\ a_1c_2 + a_2c_1\ = \ 6,\]
from which we construct
\[Q(x)\ = \ a_1x^2+(a_2-d_2)x+(a_3-d_3)=3x^2-2x-5.\]
We have $Q(r)=3r^2-2r-5=3(r+1)-2r-5=r-2<0$, which predicts that $\{a_n\}$ diverges to negative infinity.\\

Manually computing the terms gives
\[3,\ -2,\ 1,\ -1,\ 0,\ -1,\ -1,\ -2,\ -3,\ -5,\ -8,\ -13,\ \dots,\]
which confirms our prediction.

\newpage
\section{List of ZLRRs and derived ZLRRs}\label{app:list}

\noindent \textbf{1.} Recurrence: $G_{n+1}=G_{n-1}\,+\,G_{n-2}, \ P(x)=x^3\,-\,0\,x^2\,-\,x\,-\,1.$\\

\begin{tabular}{ r r r r r r r r r r r}

$\gamma_1=1$ & 0 & -1 & -1  &  &&&& $Q_1(x)=0x^2-x-1$ \\
  & -1&  0 & 1 & 1   &    &&&\\
  \cline{2-5}
  & $\gamma_2=-1$& -1 & 0 & 1   &    & &&$Q_2(x)=-x^2\,+\,0x\,+\,1$\\
  &   & 1  & 0 & -1& -1  &&&\\
  \cline{3-6}
  &   &  $\gamma_3=0$ & 0 & 0& -1  &&&$Q_3(x)=0x^2\,+\,0x-1$\\
\end{tabular}\\ \

\noindent Derived characteristic polynomial: $x^5\,-\,x^4\,-\,0\,x^3\,-\,0\,x^2\,-\,0\,x\,-\,1$, which corresponds to the derived PLRR $H_{n+1}=H_n\,+\,H_{n-4}$.\\

\noindent \textbf{2.} Current ZLRR: $G_{n+1} = G_{n-1}\,+\,G_{n-2}\,+\,G_{n-3}$.\\

\noindent Current characteristic polynomial: $x^4\,-\,x^2\,-\,x\,-\,1$.\\

\noindent Derived characteristic polynomial: $x^6\,-\,x^5\,-\,x^2\,-\,1$.\\

\noindent Derived PLRR: $H_{n+1}=H_n\,+\, H_{n-3}\,+\,H_{n-5}$.\\

\noindent \textbf{3.} Current ZLRR: $G_{n+1} = 2\,G_{n-1}\,+\,2\,G_{n-2}$.

\noindent Current characteristic polynomial: $x^3\,-\,2\,x\,-\,2$.\\

\noindent Derived characteristic polynomial: $x^5\,-\,x^4\,-\,2\,x\,-\,4$.\\

\noindent Derived PLRR: $H_{n+1} = H_n\,+\, 2\,H_{n-3}\,+\,4\,H_{n-4}$.\\

\noindent \textbf{4.} Current ZLRR: $G_{n+1} = 19G_{n-1} \,+\, 38G_{n-4}$.\\

\noindent Current characteristic polynomial: $x^5\,-\,19x^3\,-\,38$.\\

\noindent  Derived characteristic polynomial: $x^{29}\,-\,x^{28}\,-\,310601172680577\,x^4 \,-\,40586681545596725\,x^3\\ \,-\,4277914985538462\,x^2 \,-\,170201741455942\,x \,-\,81203021913963806$.\\

\noindent Derived PLRR: $H_{n+1} = H_n\,+\,310601172680577\,H_{n-24} \,+\,40586681545596725\,H_{n-25}\\ \,+\,4277914985538462\,H_{n-26} \,+\,170201741455942\,H_{n-27} \,+\,81203021913963806\,H_{n-28}$.\\

\noindent \textbf{5.} Current ZLRR: $G_{n+1} = 6\,G_{n-1} \,+\, 3\,G_{n-2} \,+\, 5\,G_{n-3}$.\\

\noindent Current characteristic polynomial: $x^4\,-\,6\,x^2\,-\,3\,x\,-\,5$.\\

\noindent Derived characteristic polynomial: $x^{10}\,-\,x^9\,-\,69\,x^3\,-\,1669\,x^2\,-\,722\,x\,-\,1245$.\\

\noindent Derived PLRR: $H_{n+1} = H_n\,+\, 69\,H_{n-6}\,+\,1669\,H_{n-7}\,+\,722\,H_{n-8}\,+\,1245\,H_{n-9}$.\\

\noindent \textbf{6.} Current ZLRR: $G_{n+1} = G_{n-2} \,+\, G_{n-3}$.\\

\noindent Current characteristic polynomial: $x^4\,-\,x\,-\,1$.\\

\noindent Derived characteristic polynomial: $x^{20}\,-\,x^{19}\,-\,4\,x^3\,-\,x^2\,-\,1$.\\

\noindent Derived PLRR: $H_{n+1} = H_n\,+\, 4\,H_{n-16}\,+\,H_{n-17}\,+\,H_{n-19}$.\\

\noindent \textbf{7.} Current ZLRR: $G_{n+1} = 3\,G_{n-2} \,+\, G_{n-3} \,+\, 3\,G_{n-4}$.\\

\noindent Current characteristic polynomial: $x^5\,-\,3\,x^2\,-\,x\,-\,3$.\\

\noindent Derived characteristic polynomial: $x^{13}\,-\,x^{12}\,-\,14\,x^4\,-\,3\,x^3\,-\,54\,x^2\,-\,4\,x\,-\,39$.\\

\noindent Derived PLRR: $H_{n+1} = H_n\,+\, 14\,H_{n-8}\,+\,3\,H_{n-9}\,+\,54\,H_{n-10}\,+\,4\,H_{n-11}\,+\,39\,H_{n-12}$.\\

\noindent \textbf{8.} Current ZLRR: $G_{n+1} = G_{n-2} \,+\, G_{n-19}$.\\

\noindent Current characteristic polynomial: $x^{20}\,-\,x^{17}\,-\,1$.\\

\noindent Derived characteristic polynomial: \small$x^{358}\,-\,x^{357}\,-\,4000705295\,x^{19} \,-\,7080648306\,x^{18} \,-\,575930712\,x^{17} \,-\,1937068817\,x^{16} \,-\,1082811308\,x^{15} \,-\,92014103\,x^{14} \,-\,2546102784\,x^{13} \,-\,1062101754\,x^{12} \,-\,372938426\,x^{11} \,-\,3264026504\,x^{10} \,-\,996542899\,x^9 \,-\,834914708\,x^8 \,-\,4089249024\,x^7 \,-\,890353375\,x^6 \,-\,1541366894\,x^5 \,-\,5013188421\,x^4 \,-\,759208181x^3\,-\,2567648478\,x^2 \,-\,6018966637\,x\,-\,635668820$.\\ \normalsize

\noindent Derived PLRR: \small$H_{n+1} = H_n\,+\, 4000705295\,H_{n-338} \,+\,7080648306\,H_{n-339} \,+\,575930712\,H_{n-340} \,+\,1937068817\,H_{n-341} \,+\,1082811308\,H_{n-342} \,+\,92014103\,H_{n-343} \,+\,2546102784\,H_{n-344} \,+\,1062101754\,H_{n-345} \,+\,372938426\,H_{n-346} \,+\,3264026504\,H_{n-347} \,+\,996542899\,H_{n-348} \,+\,834914708\,H_{n-349} \,+\,4089249024\,H_{n-350} \,+\,890353375\,H_{n-351} \,+\,1541366894\,H_{n-352} \,+\,5013188421\,H_{n-353}\,+\,759208181\,H_{n-354} \,+\,2567648478\,H_{n-355} \,+\,6018966637\,H_{n-356} \,+\,635668820\,H_{n-357} $.\\ \normalsize

\noindent \textbf{9.} Current ZLRR: $G_{n+1} = G_{n-2} \,+\, G_{n-19}\,+\,G_{n-20}$.\\

\noindent Current characteristic polynomial: $x^{21}\,-\,x^{18}\,-\,x\,-\,1$.\\

\noindent Derived characteristic polynomial: $x^{156}\,-\,x^{155}\,-\,16626\,x^{20} \,-\,6\,x^{19} \,-\,16814\,x^{18} \,-\,4094\,x^{17} \,-\,1037\,x^{16} \,-\,6777\,x^{15} \,-\,5088\,x^{14} \,-\,1849\,x^{13} \,-\,9106\,x^{12} \,-\,6334\,x^{11} \,-\,3060\,x^{10} \,-\,12166\,x^9 \,-\,7932\,x^8 \,-\,4851\,x^7 \,-\,16190\,x^6 \,-\,10031\,x^5 \,-\,7482\,x^4 \,-\,21483\,x^3 \,-\,12839\,x^2 \,-\,11312\,x \,-\,11809$.\\

\noindent Derived PLRR: $H_{n+1} = H_n\,+\, 16626\,H_{n-135} \,+\,6\,H_{n-136}\,+\,16814\,H_{n-137} \,+\,4094\,H_{n-138}\,+\,1037\,H_{n-139} \,+\,6777\,H_{n-140} \,+\,5088\,H_{n-141} \,+\,1849\,H_{n-142} \,+\,9106\,H_{n-143} \,+\,6334\,H_{n-144} \,+\,3060\,H_{n-145} \,+\,12166\,H_{n-146} \,+\,7932\,H_{n-147} \,+\,4851\,H_{n-148} \,+\,16190\,H_{n-149} \,+\,10031\,H_{n-150} \,+\,7482\,H_{n-151} \,+\,21483\,H_{n-152} \,+\,12839\,H_{n-153} \,+\,11312\,H_{n-154} \,+\,11809\,H_{n-155}$.\\

\noindent \textbf{10.} Current ZLRR: $G_{n+1} = G_{n-1} \,+\, 2\,G_{n-2} \,+\, 2\,G_{n-4} \,+\, 3\,G_{n-5}$.\\

\noindent Current characteristic polynomial: $x^6\,-\,x^4\,-\,2\,x^3\,-\,2\,x\,-\,3$.\\

\noindent Derived characteristic polynomial: $x^{11}\,-\,x^{10}\,-\,2\,x^5\,-\,2\,x^4\,-\,15\,x^3\,-\,x^2\,-\,7\,x\,-\,15$.\\

\noindent Derived PLRR: $H_{n+1} = H_n\,+\, 2\,H_{n-5}\,+\,2\,H_{n-6}\,+\,15\,H_{n-7}\,+\,H_{n-8}\,+\,7\,H_{n-9}\,+\,15\,H_{n-10}$.\\

\noindent \textbf{11.} Current ZLRR: $G_{n+1} = 40\,G_{n-3} \,+\, 52\,G_{n-4}$.\\

\noindent Current characteristic polynomial: $x^5\,-\,40\,x\,-\,52$.\\

\noindent Derived characteristic polynomial: $x^{25}\,-\,x^{24}\,-\,555888384\,x^4 \,-\,1064960000\,x^3 \,-\,519168000\,x^2 \,-\,3308595200\,x \,-\,4535145472$.\\

\noindent Derived PLRR: $H_{n+1} = H_n\,+\, 555888384\,H_{n-20} \,+\,1064960000\,H_{n-21} \,+\,519168000\,H_{n-22} \\
\,+\,3308595200\,H_{n-23} \,+\,4535145472\,H_{n-24}$.\\

\noindent \textbf{12.} Current ZLRR: $G_{n+1} = G_{n-8} \,+\, G_{n-9}$.\\

\noindent Current characteristic polynomial: $x^{10}\,-\,x\,-\,1$.\\

\noindent Derived characteristic polynomial: \small$x^{488}\,-\,x^{487}\,-\,7634770044678\,x^9 \,-\,16848326467063\,x^8 \,-\,\\ 25319805215106\,x^7 \,-\,29495744687667\,x^6 \,-\,27304765351108\,x^5 \,-\,19325535741204\,x^4 \,-\,8910253837548\,x^3 \,-\,1049595609091\,x^2 \,-\,321640563521\,x \,-\,1106933774826$.\\ \normalsize

\noindent Derived PLRR: \small $H_{n+1} = H_n\,+\,7634770044678\,H_{n-478} \,+\,16848326467063\,H_{n-479} \,+\,\\ 25319805215106\,H_{n-480} \,+\,29495744687667\,H_{n-481}\,+\,27304765351108\,H_{n-482} \,+\,19325535741204\,H_{n-483} \,+\,8910253837548\,H_{n-484} \,+\,1049595609091\,H_{n-485} \,+\,321640563521\,H_{n-486} \,+\,1106933774826\,H_{n-487}$.\\ \normalsize

\noindent \textbf{13.} Current ZLRR: $G_{n+1}=G_{n-2}\,+\,G_{n-4}\,+\,G_{n-6}$.\\

\noindent Current characteristic polynomial: $x^7\,-\,x^4\,-\,x^2\,-\,1$.\\

\noindent Derived characteristic polynomial: $x^{23}\,-\,x^{22}\,-\,x^6 \,-\,6\,x^5 \,-\,x^4 \,-\,6\,x^3 \,-\,x^2 \,-\,3\,x \,-\,2$.\\

\noindent Derived PLRR: $H_{n+1} = H_n\,+\, H_{n-16}\,+\,6\,H_{n-17}\,+\,H_{n-18}\,+\,6\,H_{n-19}\,+\,H_{n-20}\,+\,3\,H_{n-21}\,+\,2\,H_{n-22}$.\\

\noindent \textbf{14.} Current ZLRR: $G_{n+1}=3\,G_{n-1}\,+\,5\,G_{n-2}$.\\

\noindent Current characteristic polynomial: $x^3\,-\,3\,x\,-\,5$.\\

\noindent Derived characteristic polynomial: $x^5\,-\,x^4\,-\,2\,x^2\,-\,4\,x\,-\,15$.\\

\noindent Derived PLRR: $H_{n+1} = H_n\,+\, 2\,H_{n-2}\,+\,H_{n-3}\,+\,15\,H_{n-4}$.\\

\noindent \textbf{15.} Current ZLRR: $G_{n+1}=G_{n-6}\,+\,G_{n-12}$.\\

\noindent Current characteristic polynomial: $x^{13}\,-\,x^6\,-\,1$.\\

\noindent Derived characteristic polynomial: $x^{572}\,-\,x^{571}\,-\,141734291356872\,x^{12} \,-\,1386240086076478\,x^{11} \,-\,3383864145243271\,x^{10} \,-\,4628373080436668\,x^9 \,-\,4069191511013055\,x^8 \,-\,2094637579574813\,x^7 \,-\,395154232336030\,x^6 \,-\,528518791146011\,x^5 \,-\,1761055564629423\,x^4 \,-\,2792877805797871\,x^3\\ \,-\,2780671348399214\,x^2 \,-\,1681201891412681\,x \,-\,401879825813162$.\\

\noindent Derived PLRR: $H_{n+1} = H_n\,+\, 141734291356872\,H_{n-559} \,+\,1386240086076478\,H_{n-560}\\ \,+\,3383864145243271\,H_{n-561} \,+\,4628373080436668\,H_{n-562} \,+\,4069191511013055\,H_{n-563}\\ \,+\,2094637579574813\,H_{n-564} \,+\,395154232336030\,H_{n-565} \,+\,528518791146011\,H_{n-566}\\ \,+\,1761055564629423\,H_{n-567} \,+\,2792877805797871\,H_{n-568} \,+\,2780671348399214\,H_{n-569}\\ \,+\,1681201891412681\,H_{n-570} \,+\,401879825813162\,H_{n-571} $.\\

\noindent \textbf{16.} Current ZLRR: $G_{n+1}=G_{n-9}\,+\,G_{n-10}$.\\

\noindent Current characteristic polynomial: $x^{11}-x-1$.\\

\noindent Derived characteristic polynomial: \small $x^{665}\,-\,x^{664}\,-\,17581679276200473\,x^{10} \,-\,43065699679149511\,x^9 \,-\,70765959937154578\,x^8 \,-\,91624450164084254\,x^7 \,-\,98016133194347743\,x^6 \,-\,86803369058214690\,x^5 \,-\,61120624939489989\,x^4 \,-\,30036033003931493\,x^3 \,-\,5927897678515792\,x^2 \,-\,271244487735336\,x \,-\,\\ 1643001862841472$.\\ \normalsize

\noindent Derived PLRR: $H_{n+1}\, =\, H_n \,+\, 17581679276200473\,H_{n-654} \,+\, 43065699679149511\,H_{n-655} \,\\+ \,70765959937154578\,H_{n-656}\, +\, 91624450164084254\,H_{n-657} \,+\, 98016133194347743\,H_{n-658}\,\\
+\,86803369058214690\,H_{n-659}\,+\,61120624939489989\,H_{n-660} \,+\,30036033003931493\,H_{n-661}\\ \,+\,5927897678515792\,H_{n-662}\,+\,271244487735336\,H_{n-663} \,+\,1643001862841472\,H_{n-664}$.\\

\noindent \textbf{17.} Current ZLRR: $G_{n+1}=G_{n-1}\,+\,G_{n-6}$.\\

\noindent Current characteristic polynomial: $x^{7}\,-\,x^5\,-\,1$.\\

\noindent Derived characteristic polynomial: $x^{37}\,-\,x^{36}\,-\,18\,x^6 \,-\,2\,x^5 \,-\,9\,x^4 \,-\,2\,x^3 \,-\,7\,x^2 \,-\,9\,x \,-\,4$.\\

\noindent Derived PLRR: $H_{n+1} = H_n\,+\, 18\,H_{n-30}\,+\,2\,H_{n-31}\,+\,9\,H_{n-32}\,+\,2\,H_{n-33}\,+\,7\,H_{n-34}\,+\,9\,H_{n-35}\,+\,4\,H_{n-36}$.\\

\noindent \textbf{18.} Current ZLRR: $G_{n+1}=2G_{n-2}\,+\,3G_{n-3}\,+\,5G_{n-5}$.\\

\noindent Current characteristic polynomial: $x^6\,-\,2\,x^3\,-\,3\,x^2\,-\,5$.\\

\noindent Derived characteristic polynomial: $x^{19}\,-\,x^{18}\,-\,75\,x^5 \,-\,207\,x^4 \,-\,708\,x^3 \,-\,384\,x^2 \,-\,370\,x \,-\,740$.\\

\noindent Derived PLRR: $H_{n+1} = H_n\,+\, 75\,H_{n-13}\,+\,207\,H_{n-14}\,+\,708\,H_{n-15} \,+\, 384\,H_{n-16}\,+\,370\,H_{n-17}\,+\,740\,H_{n-18}$.\\

\noindent \textbf{19.} Current ZLRR: $G_{n+1} = G_{n-1}\,+2\,G_{n-2}$.\\

\noindent Current characteristic polynomial: $x^3\,-\,x\,-\,2$.\\

\noindent Derived characteristic polynomial: $x^8\,-\,x^7\,-\,x^2\,-\,x\,-\,6$. Derived PLRR: $H_{n+1}=H_n\,+\, H_{n-5}\,+\,H_{n-6}\,+\,6H_{n-7}$.
\vspace{-1mm}

\newpage


\ \\

MSC2010: 11B39, 65Q30

\end{document}